\newcommand{\R}{\mathbb R}
\newcommand{\Z}{\mathbb Z}
\newcommand{\N}{\mathbb N}
\providecommand{\abs}[1]{\lvert#1\rvert}
\DeclareMathOperator{\aster}{\mathbin{\raisebox{-0.2ex}{${}^*\!$}}}
\DeclareMathOperator{\dist}{dist}
\DeclareMathOperator{\iti}{it}
\DeclareMathOperator{\st}{st}
\theoremstyle{plain}
\newtheorem{theorem}{Theorem}[section]
\newtheorem{lemma}[theorem]{Lemma}
\newtheorem{proposition}[theorem]{Proposition}
\theoremstyle{definition}
\newtheorem{remark}[theorem]{Remark}
\newtheorem{example}[theorem]{Example}
\theoremstyle{remark}
\definecolor{ao}{rgb}{0.0, 0.5, 0.0}
\newsavebox\myboxA
\newsavebox\myboxB
\newlength\mylenA
\newcommand*\xoverline[2][0.75]{%
    \sbox{\myboxA}{$\m@th#2$}%
    \setbox\myboxB\null
    \ht\myboxB=\ht\myboxA%
    \dp\myboxB=\dp\myboxA%
    \wd\myboxB=#1\wd\myboxA
    \sbox\myboxB{$\m@th\overline{\copy\myboxB}$}
    \setlength\mylenA{\the\wd\myboxA}
    \addtolength\mylenA{-\the\wd\myboxB}%
    \ifdim\wd\myboxB<\wd\myboxA%
       \rlap{\hskip 0.5\mylenA\usebox\myboxB}{\usebox\myboxA}%
    \else
        \hskip -0.5\mylenA\rlap{\usebox\myboxA}{\hskip 0.5\mylenA\usebox\myboxB}%
    \fi}
\begin{document}

\title{Nonstandard analysis of asymptotic points of expansive systems}
\author{Alfonso Artigue, Luis Ferrari and Jorge Groisman}

\maketitle
\begin{abstract} 
In this paper we apply techniques from nonstandard analysis to study expansive dynamical systems.
Among other results, we provide a necessary and sufficient condition for an expansive homeomorphism on a compact metric space to admit doubly-asymptotic points in terms of the decay of expansivity constants of the powers of the system.
\end{abstract}

\section{Introduction}
\label{secIntro}
Let \((X,\dist)\) be a compact metric space. A homeomorphism \(f\colon X\to X\) is \textit{expansive} if there exists a constant \(c>0\) such that for all \(x,y\in X\), if \(x\neq y\), then \(\dist (f^n(x),f^n(y))>c\) for some \(n\in \mathbb{Z}\). We say that \(x\) and \(y\) are (\textit{doubly}-)\textit{asymptotic} 
if \(\dist (f^n(x),f^n(y))\rightarrow 0\) as \(n\rightarrow +\infty\) (\(n\rightarrow \pm \infty\)). 
It is known \cites{utz1950unstable,Schw} that every expansive homeomorphism on a compact metric space with infinitely many points has asymptotic points. 

Expansivity is a key property of hyperbolic sets (as Anosov diffeomorphisms, the non-wandering set of Smale's Axiom A systems and subshifts of finite type) and it is well known that such systems exhibit several doubly-asymptotic points, for instance, a homoclinic point with its limit periodic orbit. Additionally, expansive homeomorphisms on compact surfaces, which are known to be conjugate to Pseudo-Anosov homeomorphisms, also present doubly-asymptotic points. Moreover, all known examples of expansive homeomorphisms on compact metric spaces that are not totally disconnected have doubly-asymptotic points. In a totally disconnected space, there exist examples without doubly-asymptotic points, see \cites{Yo,King}. It is known that an expansive homeomorphism of a totally disconnected space is conjugate to a subshift.

For an expansive homeomorphism we can define the finite number
\begin{equation}
\label{ecuGamma}
\gamma(f)=\sup\{c > 0: c \text{ is an expansivity constant of } f\}.
\end{equation}
It is known and easy to prove that if $f$ is expansive then $f^n$ is also expansive for all $n\geq 1$ and 
$\gamma(f^n)\leq\gamma(f)$. However, it is unknown whether $\gamma(f^n)\to 0$ as $n\to+\infty$ or not, provided that the space has infinitely many points.
Again, this problem remains open only for subshifts since 
Sun \cite{Sun} proved that, in addition, assuming that the topological entropy is positive then $\gamma(f^n)$ decays exponentially to zero; and Fathi \cite{Fathi} proved that for spaces with positive topological dimension (\textit{i.e.}, not totally disconnected) the topological entropy is positive.

It is clear that the existence of doubly-asymptotic points implies that $\gamma(f^n)\to 0$ as $n\to\infty$. In this article we prove that a controlled decay of $\gamma(f^n)$ implies the existence of doubly-asymptotic points. For this purpose we consider hyperbolic metrics as introduced by Fathi \cite{Fathi}. In fact we use a self-similar hyperbolic metric which are known to exist for any expansive homeomorphism of a compact metric space \cite{Artigue}*{Theorem 2.3}.
We say that a compatible metric \(d\) on \(X\) is \textit{self-similar} if there exist constants \(c > 0\) and \(\lambda > 1\) such that if \(\dist (x,y) \leq c\) then \(\underset{\abs{i} = 1}{\max} \dist (f^{i}(x), f^{i}(y)) = \lambda \dist (x,y)\).

The purpose of this article is to connect these kind of concepts of topological dynamical systems with the nonstandard analysis developed by Robinson in the 1960s \cite{Robinson}. 
The application of nonstandard analysis techniques to study dynamical systems seems to have started in \cite{Hurd}. In this paper, besides making a detailed introduction of the nonstandard analysis concepts which are relevant for dynamical systems, Hurd translates and extends some notions concerning stability in the senses of Lagrange and Lyapunov, among other thing, into the nonstandard language. 
This program continued its development, see for instance \cite{Prazak}. 
In the present article we follow these lines of research with the focus in the previously mentioned problems. As usual, $\aster f$ denotes the nonstandard extension of the homeomorphism. We state our main result as follows.

\begin{theorem}
\label{MT}
An expansive dynamical system \((X,f)\) admits doubly-asymptotic points if and only if for a self-similar metric \(\dist\) with expanding factor \(\lambda\), there exists a standard real number \(C > 0\) and an infinite natural number \(N\) such that \(\gamma(\aster f^N) < \frac{C}{\lambda^{N/2}}\).
\end{theorem}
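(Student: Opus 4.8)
The proof runs in two directions and both hinge on a rigidity property of the distance profile of a pair of points under a self‑similar metric; I will extract it first. Let $c$ and $\lambda$ be the constants of the fixed self‑similar metric $\dist$ (note that $\dist$ is self‑similar for $f^{-1}$ with the same constants, and that $c$ is itself an expansivity constant of $f$, since $\dist(x,y)\le c$ forces $\dist(f^{k}x,f^{k}y)=\lambda^{k}\dist(x,y)$ in the good direction until it escapes $c$). The \emph{valley lemma} I would record is: if $(b_j)_{j=0}^{\ell}$ is a (possibly internal) sequence of positive reals with $\max(b_{j+1},b_{j-1})=\lambda b_j$ for $1\le j\le\ell-1$, then, writing $e_j=b_{j+1}/b_j$, the relation gives $e_j\le\lambda$, and "$e_j\neq\lambda\Rightarrow e_{j-1}=1/\lambda$" together with "$e_j=\lambda\Rightarrow e_{j+1}=\lambda$"; hence the ratios are $1/\lambda$ on an initial segment, then at most one intermediate value in $[1/\lambda,\lambda)$, then $\lambda$, so $b$ is non‑increasing then non‑decreasing, equals $b_0\lambda^{-j}$ on the initial decreasing run and $b_\ell\lambda^{-(\ell-j)}$ on the final increasing run, and $\max_j b_j=\max(b_0,b_\ell)$. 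Since this is internal it is available for $\aster$‑sequences. Throughout I use the standard part map $\st\colon\aster X\to X$ ($X$ compact), which commutes with $f$ and with $\dist$.

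For the forward implication, take doubly‑asymptotic $x\neq y$ and set $a_m=\dist(f^m x,f^m y)>0$. As $a_m\to 0$ when $m\to\pm\infty$, there is a standard $m_0$ with $a_m\le c$ for $|m|\ge m_0$, and the valley relation holds there. On the forward tail, any ratio $a_{m+1}/a_m=\lambda$ would propagate forward and force $a_m\to\infty$; so every tail ratio is $1/\lambda$, giving $a_m=a_{m_0}\lambda^{m_0-m}$ for $m\ge m_0$ and, symmetrically (using self‑similarity of $f^{-1}$), $a_m=a_{-m_0}\lambda^{m_0+m}$ for $m\le -m_0$. Now fix a standard $n>2m_0$: the pair $\bigl(f^{\lceil n/2\rceil}x,f^{\lceil n/2\rceil}y\bigr)$ is nontrivial, each index $\lceil n/2\rceil+nk$ has absolute value $\ge\lfloor n/2\rfloor\ge m_0$, so the $f^n$‑orbit of this pair has all distances $\le\diam(X)\,\lambda^{m_0}\lambda^{-\lfloor n/2\rfloor}\le C_0\lambda^{-n/2}$ with $C_0=\diam(X)\lambda^{m_0+1/2}$. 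Thus $C_0\lambda^{-n/2}$ is not an expansivity constant of $f^n$, whence $\gamma(f^n)<C_0\lambda^{-n/2}$ for every standard $n>2m_0$. Transferring this standard statement and evaluating at any infinite $N$ yields $\gamma(\aster f^{N})<C_0\lambda^{-N/2}$, i.e.\ the conclusion with $C=C_0$.

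For the converse, assume $\gamma(\aster f^{N})<C\lambda^{-N/2}$ for standard $C>0$ and infinite $N$. The expansivity constants of $\aster f^{N}$ form a down‑set contained in $(0,\gamma(\aster f^{N})]$, so $C\lambda^{-N/2}$ is not one, and transfer gives $u\neq v$ in $\aster X$ with $\dist(\aster f^{\,Nk}u,\aster f^{\,Nk}v)\le C\lambda^{-N/2}$ for all $k\in\Z^{*}$; since $\gamma(f)/2$ is a standard expansivity constant of $f$, transfer also gives some $j_0$ with $\dist(\aster f^{\,j_0}u,\aster f^{\,j_0}v)>\gamma(f)/2$, necessarily with $j_0$ not a multiple of $N$, and after a shift by a multiple of $N$ we may assume $0<j_0<N$. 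Put $a_j=\dist(\aster f^{\,j}u,\aster f^{\,j}v)$ on $\{0,\dots,N\}$: then $a_0,a_N$ are infinitesimal, $a_{j_0}>\gamma(f)/2$, and the valley relation holds wherever $a_j\le c$. Let $j^{*}$ be a maximizer of $a_j$; then $a_{j^{*}}$ is not infinitesimal, so $a_{j^{*}}>c$ (else the valley lemma would give $\max_j a_j=\max(a_0,a_N)\approx 0$). Let $j_1,j_2$ be the first and last indices with $a_j>c$. On $\{0,\dots,j_1\}$ every ratio is $\le\lambda$, so $a_{j_1}\le a_0\lambda^{j_1}$, which with $a_{j_1}>c$ and $a_0\le C\lambda^{-N/2}$ forces $j_1>N/2-K$ for the standard constant $K=\max(1,\lceil\log_\lambda(C/c)\rceil)$; symmetrically $j_2<N/2+K$, so $j^{*}\in(N/2-K,N/2+K)$. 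Set $x=\st(\aster f^{\,j^{*}}u)$, $y=\st(\aster f^{\,j^{*}}v)$; these are standard and $x\neq y$ because $a_{j^{*}}$ is not infinitesimal. For standard $m$ one has $\dist(f^m x,f^m y)=\st(a_{j^{*}+m})$, and for standard $m>2K$ the index $j^{*}+m$ lies in $(j_2,N)$; applying the valley lemma to $(a_j)_{j=j_2+1}^{N}$ and using $a_{j_2+1}\le c$ with $j^{*}+m-(j_2+1)>m-2K$ gives $a_{j^{*}+m}\le c\,\lambda^{2K}\lambda^{-m}$ on the decreasing run and $a_{j^{*}+m}$ infinitesimal on the increasing run toward $a_N$; either way $\dist(f^m x,f^m y)\to 0$ as $m\to+\infty$, and symmetrically as $m\to-\infty$, so $x,y$ are doubly‑asymptotic.

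The main obstacle is the interplay, in the converse, between the $N$‑step scale on which the hypothesis controls distances and the $1$‑step scale on which self‑similarity acts; the valley lemma is precisely what reconciles them, pinning the high part of the profile of $(u,v)$ to a window of \emph{standard} width around $N/2$ and forcing geometric decay on either side. The two points needing care are verifying that the peak $a_{j^{*}}$ is not infinitesimal — so the standard parts are genuinely distinct — and that the window width $2K$ is standard (which is where the self‑similarity constant $c$ and the hypothesized $C$ combine).
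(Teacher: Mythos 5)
Your proposal is correct, and its overall architecture coincides with the paper's two-theorem proof (Theorems \ref{Directo} and \ref{Reciproco}): forward, geometric decay of the distance profile of the doubly-asymptotic pair below the expansivity constant, evaluated near time $N/2$; converse, an internal witness pair for the failure of the constant $C\lambda^{-N/2}$ for $\aster f^N$, a separation window of \emph{standard} width obtained by playing the one-step bound $\dist(f(x),f(y))\le\lambda\dist(x,y)$ against $C\lambda^{-N/2}$ (your estimates $j_1>N/2-K$, $j_2<N/2+K$ are exactly the paper's inequality $N<2m+2\log(2C/c)/\log\lambda$), and standard parts taken inside that window. The packaging differs in two genuine ways. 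In the forward direction you prove the purely standard inequality $\gamma(f^n)\le C_0\lambda^{-n/2}$ for all finite $n>2m_0$ and then transfer, which yields the conclusion for \emph{every} infinite $N$ and records the standard decay rate explicitly, whereas the paper argues directly at an infinite midpoint $m$ with $N=2m$ via Lemma \ref{lemahiper}; both routes need the identification of the extended sequence $\aster\gamma$ at $N$ with the supremum of internal expansivity constants of $\aster f^N$, which the paper justifies in the remark opening \S\ref{secHypMet}. In the converse, your ``valley lemma'' (an internal rigidity statement for sequences satisfying the self-similar relation) replaces the paper's combination of Lemma \ref{lemahiper}, Lemma \ref{lemasint} and the Utz-type construction: instead of producing a backward-asymptotic pair at time $m$ and a forward-asymptotic pair at time $N-m'$ and gluing them across a finite gap $h$, you take the standard part at the peak of the profile and verify double asymptoticity by explicit geometric bounds; this is more self-contained, though it exploits the self-similar equality where the paper's Theorem \ref{Reciproco} only needs the bi-Lipschitz inequality (a slightly more general hypothesis). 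Two trivial repairs: from ``$D$ is not an expansivity constant'' you may only conclude $\gamma(f^n)\le D$, so take $C=C_0+1$ (the paper does the same with $C=c+1$); and the boundary index in your tail computation ($a_m=a_{m_0}\lambda^{m_0-m}$ versus starting the exact geometric regime at $m_0+1$) is an off-by-one absorbed into $C_0$.
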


This result opens the problem of understanding the decay of $\gamma(f^n)$ for subshifts without doubly asymptotic points and with vanishing entropy; in particular to determine whether $\gamma(f^n)$ tends to zero or not.
For this question we provide a solution in the case of countable spaces. In Theorem \ref{thmCountable} we show that every expansive homeomorphism of a compact metric space with countably many points has doubly asymptotic points.
Thus, it essentially remains the case of minimal subshifts (in the sense that every orbit is dense).
In Example \ref{exampleIET} we provide the simplest example we know of such a minimal subshift; however we were not able to determine the behaviour of $\gamma(f^n)$ as $n\to\infty$, even in this case.

We also obtain a characterization of expansive homeomorphisms without doubly-asymptotic points in terms of what we call nonstandard expansivity. 
We say that $f$ is \textit{nonstandard expansive} if 
there exists a constant $c > 0$, such that for all $x \neq y$ there exists an infinite $n$ such that $\aster \dist (\aster f^n(x), \aster f^n(y)) > c$.
In Theorem \ref{comExpVer2} we show that
$f$ is nonstandard expansive if and only if $f$ is expansive and does not have asymptotic pairs.

This paper is organized as follows. 
In \S\ref{secPrelim} we introduce the definitions and preliminary results necessary for the rest of the paper. 
In \S\ref{secAppNSA} we show applications of nonstandard analysis for the simplification of some known proofs in expansive dynamical systems and for discovering new ones. 
In \S\ref{secNSE} we introduce nonstandard expansivity and prove a characterization in terms of doubly-asymptotic points.
In \S\ref{secExpCount} we show that countable (infinite) compact metric spaces do not admit nonstandard expansive systems.
Finally, in \S\ref{secHypMet} we prove Theorem \ref{MT}.

\section{Preliminaries}
\label{secPrelim}


After Robinson's original formulation \cite{Robinson}, several formalizations of nonstandard analysis emerged, such as the approach using superstructures, or Nelson's internal set theory \cite{Nelson}. The approach we will use in this paper is that of superstructures (Loeb and Wolf's book \cite{loebwolf} is an excellent introduction to this theory). In this approach, given a set \(X\), the following sequence of sets is constructed:
\begin{align*}
V_0(X) &= X, \\
V_{n+1}(X) &= V_n(X) \cup \mathcal{P}(V_n(X)).
\end{align*}
The set \(V(X) = \bigcup_{n \in \mathbb{N}} V_n(X)\) is called the \emph{superstructure} over \(X\). 
The elements in \(X\) are said to be of rank 0, and for all \(n \geq 1\), elements in \(V_n(X) \setminus V_{n-1}(X)\) are said to be of rank \(n\).
Every superstructure \(S = V(X)\) is associated with a language \(\mathcal{L}_X\), which is defined in the usual manner in mathematical logic, see \cite{loebwolf}*{p.~38}. While the semantics of the language can be formally defined, we will interpret the formulas in an intuitive way. We believe that this approach, which prioritizes clarity and accessibility, does not compromise the understanding of the proofs.

Given a superstructure \(S\) we can extend any mathematical structure 
\(\mathcal{X} = (X, R_1, \ldots, R_k, f_1, \ldots, f_k)\), where \(R_i\) are relations and \(f_j\) are functions, to the nonstandard extension 
$
\mathcal{\aster X} = \left(\aster X, \aster R_1, \ldots, \aster R_k, \aster f_1, \ldots, \aster f_k\right)
$
within the superstructure \(\aster S\)
constructed in \cite{loebwolf}*{p.~44}.
The fundamental tool of nonstandard analysis, which we will use in this paper, is the theorem known as the \emph{Transfer Principle}. This theorem is a consequence of the classical Łoś's theorem (see \cite{loebwolf}*{Theorem 2.5.6}), and it states that a formula \(\varphi\) holds in \(\mathcal{X}\) if and only if \(\aster \varphi\) holds in \(\mathcal{\aster X}\), where \(\aster \varphi\) is obtained by replacing each constant \(c\) in \(\varphi\) with \(\aster c\).
We consider \(\aster \mathbb{Z}\) and \(\aster \mathbb{R}\), the extensions of \(\mathbb{Z}\) and \(\mathbb{R}\) respectively, as included in the nonstandard extension of a superstructure. We denote \(\mathbb{Z}_{\infty}\) as the set of infinite integers.

Now we introduce some known relations between nonstandard analysis and metric spaces. 
Also, we derive some results concerning expansivity. Thoughout this paper $(X,\dist )$ will denote a metric space with nonstandard extension $(\aster X, \aster \dist )$. 
Let $\aster \R$ be the nonstandard extension of $\R$.
%
For $x, y \in \aster X$ we say that $x \sim y$ if $\aster \dist (x,y)$ is an infinitesimal.
%
If $r \in \aster \R$ is a bounded real number we define the \emph{standard part} of $r$ as $\st(r) \in \R$ such that $\st(r) \sim r$.
The next results will be used later.
%
\begin{theorem}[Robinson's Compactness Criterion, \cite{loebwolf}*{p. 86}]
\label{Robinson}
The metric space $X$ is compact if and only if for every $y \in {^*}X$, there exists $x \in X$ such that $x \sim y$.
\end{theorem}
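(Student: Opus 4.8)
The plan is to prove the two implications separately, invoking the Transfer Principle in each direction. Compactness will enter precisely because it lets me replace an infinite cover by a finite one, and a finite disjunction is a first-order formula that transfers verbatim to $\aster X$; without the reduction to finiteness there would be no transferable statement to work with.

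For the forward implication, I assume $X$ is compact and fix $y \in \aster X$, arguing by contradiction under the supposition that no standard point is infinitely close to $y$. Then for every $x \in X$ the quantity $\aster\dist(x,y)$ is a positive non-infinitesimal, so I may pick a standard radius $\epsilon_x > 0$ with $\aster\dist(x,y) > \epsilon_x$. The standard open balls $\{B(x,\epsilon_x)\}_{x\in X}$ form an open cover of $X$, and compactness yields a finite subcover $B(x_1,\epsilon_{x_1}),\dots,B(x_k,\epsilon_{x_k})$. The assertion that these finitely many balls cover $X$ is a first-order formula (a finite disjunction inside a bounded universal quantifier), so by transfer their $\aster$-extensions cover $\aster X$; in particular $y \in \aster B(x_i,\epsilon_{x_i})$ for some $i$, that is $\aster\dist(x_i,y) < \epsilon_{x_i}$, contradicting the choice $\aster\dist(x_i,y) > \epsilon_{x_i}$.

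For the backward implication, I assume every $y \in \aster X$ is infinitely close to some standard point and prove that $X$ is sequentially compact, hence compact since $X$ is metric. Given a sequence $(x_n)_{n\in\N}$ in $X$, I regard it as a function $s\colon\N\to X$ and pass to its internal extension $\aster s\colon \aster\N\to\aster X$. For an infinite $N\in\Z_\infty$ the point $p=\aster s(N)$ lies in $\aster X$, so the hypothesis furnishes a standard $x\in X$ with $p\sim x$. I then claim $x$ is a subsequential limit: for every standard $\epsilon>0$ and standard $m\in\N$, the relations $\aster\dist(p,x)<\epsilon$ and $N>m$ witness the internal statement ``there exists $n>m$ with $\aster\dist(\aster s(n),x)<\epsilon$''; transferring this back produces a genuine standard $n>m$ with $\dist(x_n,x)<\epsilon$. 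Running this over $\epsilon=1/k$ with successively larger $m$ extracts a subsequence converging to $x$, giving sequential compactness.

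I expect the only real content to be bookkeeping about what is internal versus external, rather than any estimate. In the forward direction the essential role of compactness is to shrink the cover to a finite one \emph{before} applying transfer, since the cover indexed by all of $X$ is not first-order and would not transfer. In the backward direction the delicate points are that the extended sequence $\aster s$ and the index set $\{n : \aster\dist(\aster s(n),x)<\epsilon\}$ are internal, so that transfer legitimately applies, together with the reliance on the metric equivalence between compactness and sequential compactness. These verifications, not any computation, are where I anticipate the care is required.
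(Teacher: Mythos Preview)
Your proof is correct and is essentially the standard textbook argument. Note, however, that the paper does not actually prove this theorem: it is quoted as a known result with a reference to \cite{loebwolf}*{p.~86}, so there is no ``paper's own proof'' to compare against. Your argument is the classical one found in that reference and elsewhere.
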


\begin{proposition}[\cite{loebwolf}*{Theorem 1.9.2}]
\label{continuidadnoestandar}
If $x_0 \in X$, then $f\colon X\to X$ is continuous at $x_0$ if and only if for every $x \in {^*}X$ with $x \sim x_0$, we have ${^*}f(x) \sim f(x_0)$.
\end{proposition}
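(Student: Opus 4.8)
The plan is to establish both implications directly from the $\varepsilon$--$\delta$ definition of continuity at $x_0$ via the Transfer Principle, using that $x_0$ is standard (so $\aster f(x_0) = f(x_0)$ and $\aster\dist(\aster f(x_0),\cdot)$ agrees with $\dist(f(x_0),\cdot)$ where needed) and that $\aster\R$ contains positive infinitesimals.

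For the forward implication, suppose $f$ is continuous at $x_0$ and fix an arbitrary standard $\varepsilon > 0$. By continuity there is a standard $\delta > 0$ for which the formula
\[
(\forall y \in X)\bigl(\dist(x_0,y) < \delta \rightarrow \dist(f(x_0),f(y)) < \varepsilon\bigr)
\]
holds in $\mathcal X$. Applying Transfer, the same formula with $\aster$ placed on $X$, $\dist$ and $f$ holds in $\mathcal{\aster X}$. Now take any $x \in \aster X$ with $x \sim x_0$: then $\aster\dist(x_0,x)$ is infinitesimal, hence $\aster\dist(x_0,x) < \delta$, so the transferred implication gives $\aster\dist(f(x_0),\aster f(x)) < \varepsilon$. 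Since $\varepsilon$ was an arbitrary standard positive real, $\aster\dist(f(x_0),\aster f(x))$ is smaller than every standard positive real, hence infinitesimal, which is exactly $\aster f(x) \sim f(x_0)$.

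For the converse I argue by contraposition. If $f$ is not continuous at $x_0$, there is a standard $\varepsilon > 0$ such that
\[
(\forall \delta > 0)(\exists y \in X)\bigl(\dist(x_0,y) < \delta \wedge \dist(f(x_0),f(y)) \geq \varepsilon\bigr).
\]
Transferring this formula, it holds for every $\delta \in \aster\R$ with $\delta > 0$. The decisive step is to instantiate $\delta$ with a positive infinitesimal $\eta \in \aster\R$: this yields some $x \in \aster X$ with $\aster\dist(x_0,x) < \eta$ and $\aster\dist(f(x_0),\aster f(x)) \geq \varepsilon$. The first inequality forces $x \sim x_0$, while the second shows that $\aster\dist(f(x_0),\aster f(x))$ is not infinitesimal, so $\aster f(x) \not\sim f(x_0)$. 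Thus the nonstandard condition fails, completing the contrapositive.

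The only delicate point is the quantifier bookkeeping under Transfer together with the observation that a positive infinitesimal exists in $\aster\R$ to feed into the transferred negation of continuity; everything else is a direct unwinding of the definitions of $\sim$ and of the standard part.
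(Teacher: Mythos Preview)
Your argument is correct and is precisely the standard proof of this characterization via Transfer. Note, however, that the paper does not supply its own proof: the proposition is quoted directly from \cite{loebwolf}*{Theorem 1.9.2} and stated without demonstration, so there is nothing in the paper to compare your argument against beyond observing that you have faithfully reproduced the textbook approach.
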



From now on $f\colon X \rightarrow X$ will denote a homeomorphism. 

Let us finally state some known facts about certain metric properties of expansive homeomorphisms.
We say that a metric $\dist$ is \textit{bi-Lipschitz} for $f$ with \textit{Lipschitz constant} $\lambda$ if for all $x, y \in X$ the following holds:
$$\max_{|i|=1} \dist (f^{i}(x), f^{i}(y)) \leq \lambda \dist (x,y).$$
We say that $\dist$ is a \textit{hyperbolic metric} for $f$ with \textit{expanding factor} $\lambda >1$ if for an expansivity constant $c$ it holds that if $\dist (x,y) \leq c$ then
$$\underset{\abs{i} =1}{\max} \dist(f^{i}(x), f^{i}(y)) \geq \lambda \dist (x,y).$$
In 
\cite{Fathi} Fathi proved that every expansive homeomorphism on a compact metric space admits a hyperbolic metric.
Given a homeomorphism $f\colon X \rightarrow X$, we say that a compatible hyperbolic metric $\dist$ on $X$ is \textit{self-similar} \cite{Artigue}
if there are constants $c > 0$, $\lambda > 1$ such that if $\dist (x,y) \leq c$, then
$$\underset{\abs{i} =1}{\max} \dist (f^{i}(x), f^{i}(y)) = \lambda \dist (x,y).$$
In \cite{Artigue}*{Theorem 2.3} it is shown that every expansive homeomorphism on a compact metric space admits a self-similar metric.
The expanding factor $\lambda$ can be taken as the bi-Lipschitz constant for $f$.

\section{Applications of Nonstandard Analysis}
\label{secAppNSA}
In this section we start applying nonstandard analysis to study expansivity.
First we show a nonstandard characterization of asymptotic points 
for a continuous map $f\colon X\to X$. 
Its proof is similar to \cite{Hurd}*{Theorem 3.1}.

\begin{proposition}
\label{proasint}
Two points $x,y\in X$ are asymptotic if and only if for every infinitely positive integer $m\in\aster\Z$ we have 
$\aster f^{m}(x) \sim \aster f^m(y).$
\end{proposition}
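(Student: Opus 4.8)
The plan is to prove both implications using the standard-part machinery together with Robinson's compactness criterion and the nonstandard characterization of continuity (Proposition~\ref{continuidadnoestandar}).

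For the forward implication, suppose $x$ and $y$ are asymptotic, so $\dist(f^n(x),f^n(y))\to 0$ as $n\to+\infty$. This is a statement about the standard sequence $a_n := \dist(f^n(x),f^n(y))$ converging to $0$; equivalently, for every standard $\varepsilon>0$ there is a standard $n_0$ such that for all standard $n\ge n_0$ we have $a_n<\varepsilon$. By the Transfer Principle, for every standard $\varepsilon>0$ and the corresponding $n_0$, the inequality $\aster a_m < \varepsilon$ holds for \emph{all} $m\in\aster\Z$ with $m\ge n_0$; in particular it holds for every infinite positive $m$. Since this is true for every standard $\varepsilon>0$, the quantity $\aster a_m = \aster\dist(\aster f^m(x),\aster f^m(y))$ is infinitesimal, i.e.\ $\aster f^m(x)\sim\aster f^m(y)$, for every infinitely positive integer $m$. (Here I am implicitly using that $\aster f^m = (\aster f)^m$ and that iteration transfers correctly, which is routine.)

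For the reverse implication, suppose $x$ and $y$ are \emph{not} asymptotic. Then $a_n=\dist(f^n(x),f^n(y))$ does not tend to $0$, so there is a standard $\varepsilon>0$ and an increasing sequence $n_1<n_2<\cdots$ of standard naturals with $a_{n_k}\ge\varepsilon$ for all $k$. Consider the (standard) subset $S=\{n\in\N : a_n\ge\varepsilon\}$; it is infinite, hence by transfer $\aster S$ contains an infinite element $m$ (an infinite subset of $\N$, viewed in the superstructure, has $\aster S$ meeting $\Z_\infty$ — this follows from the standard fact that an infinite set is unbounded, transferred). For this infinite $m$ we have $\aster a_m\ge\varepsilon$ with $\varepsilon$ standard, so $\aster\dist(\aster f^m(x),\aster f^m(y))$ is not infinitesimal, i.e.\ $\aster f^m(x)\not\sim\aster f^m(y)$. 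This produces an infinitely positive integer violating the right-hand side, completing the contrapositive.

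The only delicate point is the second implication: one must argue that an infinite subset $S\subseteq\N$ has $\aster S\cap\Z_\infty\ne\varnothing$. This is standard but deserves a line — the statement ``for every $k\in\N$ there exists $n\in S$ with $n>k$'' transfers to ``for every $k\in\aster\N$ there exists $n\in\aster S$ with $n>k$,'' and applying it with $k$ an arbitrary infinite natural number yields the desired infinite element of $\aster S$. Everything else is a direct application of transfer to the $\varepsilon$--$n_0$ definition of convergence. Note that compactness of $X$ is not actually needed for this proposition; it is the continuity of $f$ (or merely that $f^n$ is well-defined for all $n$) that matters, and even that enters only through the fact that $\aster f^m$ makes sense for infinite $m$.
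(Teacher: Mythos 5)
Your proof is correct and follows essentially the same route as the paper's: the forward direction transfers the $\varepsilon$--$n_0$ definition of convergence exactly as the paper does, and the converse extracts an infinite time at which the distance exceeds a fixed standard $\varepsilon$. The only cosmetic difference is that you transfer the unboundedness of the set $S=\{n:\dist(f^n(x),f^n(y))\geq\varepsilon\}$, whereas the paper skolemizes with a function $\psi$ satisfying $\psi(n)>n$ and $\dist(f^{\psi(n)}(x),f^{\psi(n)}(y))>\varepsilon$ and evaluates $\aster\psi$ at an infinite integer --- the same underlying idea.
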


\begin{proof}
To prove the direct part suppose that 
$\lim\limits _{n \rightarrow + \infty} \dist (f^n(x), f^n(y))= 0$. Then, for any $\varepsilon \in \mathbb{R}$ with $\varepsilon > 0$, there exists $n_{\varepsilon} \in \mathbb{Z}^{+}$ such that the following formula holds:
$$(\forall m \in \mathbb{Z}^{+})(m \geq n_{\varepsilon} \rightarrow \dist (f^m(x),f^m(y)) < \varepsilon).$$
By the Transfer Principle we have that the following formula is also true:
$$(\forall m \in \aster \mathbb{Z}^{+})(m \geq n_{\varepsilon} \rightarrow \aster \dist (\aster f^m(x), \aster f^m(y)) < \varepsilon).$$
If $m$ is an infinitely positive integer, then $m > n_{\varepsilon}$ for any $\varepsilon$, thus $\aster \dist (\aster f^m(x), \aster f^m(y)) < \varepsilon$ holds for any $\varepsilon$, implying $\aster f^m(x) \sim \aster f^m(y)$.

To prove the converse suppose $\lim\limits _{n \rightarrow + \infty} \dist (f^n(x), f^n(y)) \neq 0$. 
Then, there exists $\varepsilon \in \mathbb{R}$ with $\varepsilon > 0$ 
such that for every $n \in \mathbb{N}$, the following holds:
$$(\exists m \in \mathbb{Z}^{+})((m \geq n) \wedge \dist (f^m(x),f^m(y)) > \varepsilon).$$
Then, taking a function $\psi : \mathbb{Z}^{+} \rightarrow \mathbb{Z}^{+}$ we have
$$(\forall n \in \mathbb{Z}^{+})(\psi(n) > n) \wedge (\dist (f^{\psi(n)}(x), f^{\psi(n)}(y)) > \varepsilon).$$
By the Transfer Principle we obtain:
$$(\forall n \in \aster \mathbb{Z}^{+}) (\aster \psi(n) > n) \wedge (\aster \dist (\aster f^{{^*}\psi(n)}(x), \aster f^{{^*}\psi(n)}(y)) > \varepsilon).$$
If $m$ is an infinitely positive integer then $\aster \psi(m)$ is also an infinitely positive integer. Since $\dist (f^{\psi(n)}(x), f^{\psi(n)}(y)) > \varepsilon$ it follows that $f^m(x) \not\sim f^m(y)$. 
\end{proof}

The next result characterizes asymptoticity for expansive homeomorphisms and is well known. 
We give a nonstandard proof.

\begin{lemma}\label{lemasint} 
Suppose that $f$ is an expansive homeomorphism.
If $x, y \in X$ and  
 there exists an expansivity constant $\delta$ such that for every $n \in \Z^{+}$, 
$\dist (f^n(x), f^n(y)) \leq \delta$, then $x$ and $y$ are asymptotic.
\end{lemma}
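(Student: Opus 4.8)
The plan is to argue by contradiction, converting the classical compactness argument into nonstandard language via the characterization of asymptoticity in Proposition \ref{proasint}. Suppose $x$ and $y$ are \emph{not} asymptotic. Then there is an infinitely positive integer $m \in \aster\Z$ with $\aster f^m(x) \not\sim \aster f^m(y)$; set $a = \aster f^m(x)$ and $b = \aster f^m(y)$, so that $\aster\dist(a,b)$ is not infinitesimal (and it is bounded, since $X$ is compact hence bounded).

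First I would extract standard "limit points" of the shifted orbits. By Robinson's Compactness Criterion (Theorem \ref{Robinson}) there exist $x_\infty, y_\infty \in X$ with $x_\infty \sim a$ and $y_\infty \sim b$. Using the transferred triangle inequality, $\aster\dist(x_\infty, y_\infty)$ differs from $\aster\dist(a,b)$ by an infinitesimal, so $\dist(x_\infty, y_\infty) = \st\big(\aster\dist(a,b)\big) > 0$; in particular $x_\infty \neq y_\infty$.

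Next I would show that the entire $f$-orbit of the pair $(x_\infty, y_\infty)$ stays within $\delta$. Fix any standard $j \in \Z$. Since $f^j$ is a standard homeomorphism, Proposition \ref{continuidadnoestandar} gives $\aster f^j(x_\infty) \sim \aster f^j(a) = \aster f^{m+j}(x)$ and likewise $\aster f^j(y_\infty) \sim \aster f^{m+j}(y)$. Because $m$ is infinite, $m+j$ is again an infinitely positive integer, so applying the Transfer Principle to the hypothesis $(\forall n \in \Z^{+})\, \dist(f^n(x), f^n(y)) \leq \delta$ yields $\aster\dist(\aster f^{m+j}(x), \aster f^{m+j}(y)) \leq \delta$. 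Combining the two $\sim$-relations with the triangle inequality and taking standard parts, $\dist(f^j(x_\infty), f^j(y_\infty)) \leq \delta$. Since $j \in \Z$ was arbitrary, this holds for all $j$, contradicting the fact that $\delta$ is an expansivity constant for $f$ applied to the pair $x_\infty \neq y_\infty$. This contradiction shows $x$ and $y$ are asymptotic.

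There is no serious obstacle in this argument; the only point needing care is the bookkeeping with the time-shift: one must pass from the infinite-time iterates $\aster f^m(x), \aster f^m(y)$ to their standard parts $x_\infty, y_\infty$, and then push the hypothesis forward along the \emph{finite} shifts $f^j$, noting that $m+j$ remains infinite and positive so that the bound $\leq \delta$ continues to apply.
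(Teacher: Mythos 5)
Your proposal is correct and follows essentially the same route as the paper's proof: argue by contradiction via Proposition \ref{proasint}, extract standard points near the infinite-time iterates by Robinson's Compactness Criterion, push the transferred bound $\leq \delta$ back along finite iterates by continuity, and contradict expansivity with constant $\delta$. The only cosmetic difference is that you make the standard-part bookkeeping (the positivity of $\st(\aster\dist(a,b))$ and the passage from $\leq \delta + \text{infinitesimal}$ to $\leq \delta$) slightly more explicit than the paper does.
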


\begin{proof}
Applying the Transfer Principle to the formula
$(\forall n \in \Z^{+})(\dist (f^n(x), f^n(y)) \leq \delta)$ 
we have
$$(\forall n \in \aster \Z^{+})(\aster \dist (\aster f^n(x), \aster f^n(y)) \leq \delta).$$
Arguing by contradiction and applying Proposition \ref{proasint} we can 
take an infinitely positive integer $N$ such that $\aster f^N(x) \not \sim \aster f^N(y)$. 
Therefore, there exists a positive standard real number $r$ such that $r < \aster \dist (\aster f^N(x), \aster f^N(y))$.
By the compactness criterion of Robinson, there exist $x', y' \in X$ such that $\aster f^N(x) \sim x'$ and $\aster f^N(y) \sim y'$. 
By continuity, for every $n \in \Z$, we have $\aster f^{N+n}(x) \sim f^n(x')$ and $\aster f^{N+n}(y) \sim f^n(y')$. 
Therefore, $r < \dist (f^n(x), f^n(y)) \leq \delta$, which contradicts the expansivity hypothesis. 
\end{proof}

Suppose that $x,y\in \aster X$, $x\neq y$ and $x\sim y$. 
By continuity, for any standard integer $n\in\Z$ we have 
$\aster f^n(x)\sim\aster f^n(y)$. That is, $\aster\dist(\aster f^n(x),\aster f^n(y))$ is infinitesimal for all $n\in\Z$, even if $f$ is expansive. The following result shows that the expansiveness of the dynamics $(X, f)$ essentially extends to the dynamics $(\aster X, \aster f)$.

\begin{remark}
\label{rmkExpInfinitesimos}
Assuming expansiveness, for any two distinct points $x, y \in X$, there exists a time, minimal in absolute value, at which they are separated by a distance greater than the expansiveness constant. Consider the formula
\[
\varphi(x, y) := 
(x \neq y \rightarrow 
	(\exists n_{\min} \in \mathbb{Z} : \dist (f^{n_{\min}}(x), f^{n_{\min}}(y)) > c) \wedge (\forall n \in \mathbb{Z}, \dist (f^n(x), f^n(y)) > c \rightarrow \lvert n \rvert \geq |n_{\min}|)).
\]
By the Transfer Principle, the formula
    $
        (\forall x, y \in \aster X) \aster \varphi
    $
    also holds true. Therefore, the same result applies to any two points in $\aster X$ with respect to the dynamics $(\aster X, \aster f)$, using the same expansiveness constant as the dynamics $(X, f)$. In particular, if $(X,f)$ is an expansive dynamical system with expansiveness constant $c > 0$, then $(\aster X, \aster f)$ is also "expansive" in the sense that, for any distinct points $x,y$ in $\aster X$, there exists $n \in \aster \mathbb{Z}$ such that $\aster \dist(\aster f^n(x), \aster f^n(y)) > c$. Note that this implies that if $ x \sim y$, then there exists an infinite $n \in \aster \mathbb{Z}$ such that $\aster \dist(\aster f^n(x), \aster f^n(y)) > c$.
\end{remark}

We present below a nonstandard proof of a result due to Utz.
The graphical intuition shown in Figure \ref{fig:dibujo1} is the key.

\begin{theorem}[Utz \cite{utz1950unstable}]
\label{thmExistAsymptUtz}
If $f$ is expansive and $X$ has infinitely many points then there exist different asymptotic points $x,y \in X$ for $f$ or $f^{-1}$.
\end{theorem}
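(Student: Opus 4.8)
The plan is to exploit compactness to produce, for each $n$, a pair of distinct points whose forward orbits stay together for at least $n$ steps within an expansivity constant, and then pass to the nonstandard extension to get a pair that stays together for an \emph{infinite} number of steps. Concretely, fix an expansivity constant $c$ and its associated $\gamma(f)$; since $X$ is infinite and compact, it is not uniformly discrete, so there are distinct points arbitrarily close together. For such a nearby pair $x\neq y$, by Remark~\ref{rmkExpInfinitesimos} (or directly by expansivity) there is a first time $n_{\min}(x,y)$, minimal in absolute value, with $\dist(f^{n_{\min}}(x),f^{n_{\min}}(y))>c$; since $x$ and $y$ are close this index must be large in absolute value. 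So I get a sequence of pairs $(x_k,y_k)$ with $x_k\neq y_k$, $\dist(f^i(x_k),f^i(y_k))\le c$ for all $|i|<m_k$ where $m_k\to\infty$, and equality failing first at some index of absolute value $m_k$. This is exactly the situation pictured in Figure~\ref{fig:dibujo1}: the orbit segment is $c$-shadowed on a long window and then ``escapes''.

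Next I would transfer to $\aster X$. Pick an infinite $N\in\aster\Z$ and, applying the Transfer Principle to the statement ``for every $k$ there exist $x,y$ with $x\neq y$, $\dist(f^i(x),f^i(y))\le c$ for all integers $i$ with $|i|\le k$, and $\dist(f^{j}(x),f^{j}(y))>c$ for some $j$ with $|j|=k+1$'', I obtain $x,y\in\aster X$, $x\neq y$, with $\aster\dist(\aster f^i(x),\aster f^i(y))\le c$ for all $i\in\aster\Z$ with $|i|\le N$, while the escape happens at an index of absolute value $N+1$. Now use Robinson's Compactness Criterion (Theorem~\ref{Robinson}) to find standard $x',y'\in X$ with $\aster f^N(x)\sim x'$ and $\aster f^N(y)\sim y'$ — or, symmetrically, shadow near the escape time on the $f^{-1}$ side. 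By continuity (Proposition~\ref{continuidadnoestandar}), for every standard $n\ge 0$ we get $\aster f^{N-n}(x)\sim f^{-n}(x')$ and likewise for $y$; since all of these points were within $c$ of each other, $\dist(f^{-n}(x'),f^{-n}(y'))\le c$ for all standard $n\ge 0$, i.e. the backward orbit of the pair $(x',y')$ stays within the expansivity constant. By Lemma~\ref{lemasint} applied to $f^{-1}$, the points $x'$ and $y'$ are asymptotic for $f^{-1}$.

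It remains to rule out $x'=y'$, and this is the step I expect to be the main obstacle. The pair $(x,y)$ was chosen so that the first escape is at absolute value $N+1$; I want to place the ``escape'' on the forward side, say at time $+(N+1)$ (choosing the sign of the window appropriately, or splitting into cases according to whether infinitely many of the escape times $\pm(m_k+1)$ are positive or negative — at least one case occurs, giving asymptoticity for $f$ or for $f^{-1}$). Then $\aster\dist(\aster f^{N+1}(x),\aster f^{N+1}(y))>c$, and since $\aster f^{N+1}(x)\sim \aster f(x')$ and $\aster f^{N+1}(y)\sim\aster f(y')$ by continuity, we would get $\dist(f(x'),f(y'))\ge c>0$, hence $x'\neq y'$. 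Thus the genuine content is the bookkeeping that guarantees the escape time sits just \emph{beyond} the shadowing window on a definite side, so that the standard limit pair produced by Robinson's criterion is genuinely nondegenerate; everything else is a routine transfer-and-standard-part argument. The dichotomy ``$f$ or $f^{-1}$'' in the statement is precisely what absorbs the choice of sign of the escape time.
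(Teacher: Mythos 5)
Your proposal is correct in substance and runs on the same nonstandard skeleton as the paper's proof: produce a pair in $\aster X$ that stays within the expansivity constant over an infinite window and separates just past it, take standard parts near the separation time via Robinson's criterion, push the bound to the backward orbit of the standard pair by continuity, and finish with Lemma \ref{lemasint} applied to $f^{-1}$, the sign of the escape time giving the ``$f$ or $f^{-1}$'' dichotomy. The differences are exactly at the two spots you flag as delicate, and the paper's arrangement dissolves both. First, rather than transferring a sequence of standard pairs with growing windows, the paper takes an accumulation point of $X$, which gives a standard $x$ and a $y\in\aster X$ with $x\sim y$, $x\neq y$; Remark \ref{rmkExpInfinitesimos} (the transferred minimal escape time) together with continuity at standard times then shows the minimal $m$ with $\aster\dist(\aster f^m(x),\aster f^m(y))>c$ is automatically infinite, so no finite-window bookkeeping is needed. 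Second, the paper takes the standard parts $x',y'$ at the escape time $m$ itself rather than one step before it, so $\dist(x',y')\geq c$ and the nondegeneracy you call ``the main obstacle'' is immediate, while minimality of $\abs{m}$ gives $\aster\dist(\aster f^{m-n}(x),\aster f^{m-n}(y))\leq c$ for all standard $n\geq 1$, which is all that Lemma \ref{lemasint} requires. Your route also works, but the transferred sentence needs a small repair: the compactness/uniform-continuity argument does not produce, for every $k$, a pair whose first escape is exactly at absolute value $k+1$; you should existentially quantify the window length (``there exist $x\neq y$ and $m>k$ with $\dist(f^i(x),f^i(y))\leq c$ for all $\abs{i}<m$ and separation at some $\abs{j}=m$''), which is precisely what your pairs $(x_k,y_k)$ with first escape $m_k$ witness, and whose transfer at an infinite $k$ yields the internal pair your argument (including your distinctness step via $\dist(f(x'),f(y'))\geq c$) then handles correctly.
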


\begin{proof}
Since $X$ is infinite and compact there exists an accumulation point, which implies the existence of $x \in X$ and $y \in \aster X$ such that $x \sim y$. From Remark \ref{rmkExpInfinitesimos} there exists $m \in \aster \Z$, the minimum in absolute value, such that $ \aster \dist (\aster f^m(x), \aster f^m(y)) > c.$
Assuming that $m$ is positive we will show that there are asymptotic points for $f^{-1}$ (for $m$ negative the same argument gives asymptotic points for $f$). 
By continuity, for every $n \in \Z$ we have $\aster f^n(x) \sim \aster f^n(y)$, thus $m$ is infinite. Since $X$ is compact, by Theorem \ref{Robinson} there exist $x', y' \in X$ such that $x' \sim \aster f^m(x)$ and $y' \sim \aster f^m(y)$. See Figure \ref{fig:dibujo1}.

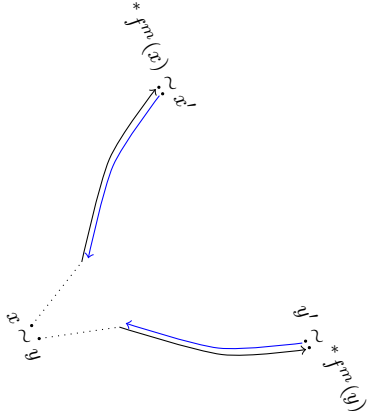
\begin{figure}[H]
  \centering
  \begin{tikzpicture}[rotate=30][scale=2][baseline={(0,0)}]
	\draw [draw=black,->] plot [smooth] coordinates {(1,0.5) (2,1.5) (3,2)};
	\draw[fill=black, draw=black] (3.05,2) circle (0.4pt);
	\draw[fill=black, draw=black] (3.05,1.9) circle (0.4pt);
	\node[below,font=\footnotesize,rotate=270] at (3.8,2) {\rotatebox{30}{$\aster f^m(x) \sim x'$}};
	\draw[fill=black, draw=black] (0,0.1) circle (0.4pt);
	\draw[fill=black, draw=black] (0,-0.1) circle (0.4pt);
	\draw [dotted] (0,0.1) -- (1,0.5);
	
	\node[below,font=\footnotesize,rotate=270] at (0.14,-0.16) {\rotatebox{30}{$x \sim y$}};
	\draw [draw=blue,->] plot [smooth] coordinates {(3,1.9) (2,1.4) (1.1,0.5)};
	\draw [draw=black,->] plot [smooth] coordinates {(1,-0.5) (2,-1.5) (3,-2)};
	\draw [dotted] (0,-0.1) -- (1,-0.5);
	\draw[fill=black, draw=black] (3.05,-2) circle (0.4pt);
	\draw[fill=black, draw=black] (3.05,-1.9) circle (0.4pt);
	\node[below,font=\footnotesize,rotate=270] at (3.8,-2.6) {\rotatebox{30}{$y'\sim \aster f^m(y)$}};
	\draw [draw=blue, ->] plot [smooth] coordinates {(3,-1.9) (2,-1.4) (1.1,-0.5)};
\end{tikzpicture}
  \caption{Construction of the asymptotic points $x'$ and $y'$.}
  \label{fig:dibujo1}
\end{figure}

By continuity, for every $n \in \Z^{+}$, we have $\aster f^{m-n}(x) \sim f^{-n}(x')$ and 
$\aster f^{m-n}(y) \sim f^{-n}(y')$, but 
$$\aster \dist (\aster f^{m-n}(x), \aster f^{m-n}(y)) \leq c$$ 
for every $n \in \Z^{+}$, therefore $\dist (f^{-n}(x'), f^{-n}(y')) \leq c$ for every $n \in \Z^{+}$. Hence, by Lemma \ref{lemasint}, we can conclude that $x'$ and $y'$ are asymptotic for $f^{-1}$.
\end{proof}

\begin{remark}
By \cite{Schw} we know that every expansive homeomorphism of a compact metric space with infinitely many points has asymptotic points in both senses, for $f$ and for $f^{-1}$. 
Some steps of this proof can be done by nonstandard analysis and are similar to those in the previous result.
\end{remark}

\section{Nonstandard expansivity}
\label{secNSE}
In this section we explore a variation of the definition of expansivity which seems natural from the nonstandard viewpoint. It is remarkable its relation with the existence of doubly asymptotic points.
We say that $f\colon X \rightarrow X$ is \textit{nonstandard expansive} if there exists a constant $c > 0$ such that for all $x,y\in X$, $x \neq y$, there exists an infinite $n$ such that $\aster \dist (\aster f^n(x), \aster f^n(y)) > c$.

\begin{theorem} \label{comExpVer2}
Let $f : X \rightarrow X$ be a homeomorphism. 
The following statements are equivalent:
\begin{enumerate}
\item $f$ is nonstandard expansive,
\item there is $c>0$ such that for all $x, y \in X$ with $x \neq y$ the set $\lbrace n \in \mathbb{Z} : d(f^n(x), f^n(y)) > c \rbrace$ is infinite,
\item $f$ is expansive 
without doubly-asymptotic points.
\end{enumerate}
\end{theorem}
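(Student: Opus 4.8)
The plan is to prove the cycle $(1)\Rightarrow(2)\Rightarrow(3)\Rightarrow(1)$, translating each implication between standard statements and their nonstandard counterparts via the Transfer Principle, exactly in the spirit of Proposition \ref{proasint} and Lemma \ref{lemasint}.

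For $(1)\Rightarrow(2)$ I would argue by contraposition. Suppose there is a pair $x\neq y$ and a constant $c>0$ such that $S_c=\{n\in\Z:\dist(f^n(x),f^n(y))>c\}$ is finite; I want to show $f$ is not nonstandard expansive with that $c$, and then quantify over $c$. If $S_c$ is finite then there is a standard $n_0$ with $\dist(f^n(x),f^n(y))\le c$ for all $|n|\ge n_0$; transferring the formula $(\forall n\in\Z)(|n|\ge n_0\to\dist(f^n(x),f^n(y))\le c)$ gives $\aster\dist(\aster f^n(x),\aster f^n(y))\le c$ for all $n\in\aster\Z$ with $|n|\ge n_0$, in particular for every infinite $n$. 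Since this must be handled for every candidate constant $c$, the cleanest route is: assume $f$ is \emph{not} nonstandard expansive in the sense of (2)'s failure, i.e. for the specific $c$ in (1) there is a bad pair; reduce to the case where the set is bounded above or below and apply Lemma \ref{lemasint}-type reasoning. Actually it is cleaner to prove $\neg(2)\Rightarrow\neg(1)$ directly: if for every $c>0$ there is a pair $x_c\neq y_c$ with $\{n:\dist(f^n(x_c),f^n(y_c))>c\}$ finite, then for each such pair and each infinite $n$ we get $\aster\dist(\aster f^n(x_c),\aster f^n(y_c))\le c$ by the transfer argument above, contradicting (1). For the reverse direction within this step, $(2)\Rightarrow(1)$ is immediate: if the standard set is infinite then it contains arbitrarily large $n$, so transferring $(\forall m\in\Z)(\exists n\in\Z)(|n|>m\wedge\dist(f^n(x),f^n(y))>c)$ and composing with a choice function $\psi$ as in the proof of Proposition \ref{proasint} produces an infinite $n$ realizing the separation. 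So (1) and (2) are essentially the same statement phrased standardly and nonstandardly.

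For $(2)\Rightarrow(3)$: first, (2) with constant $c$ trivially implies expansivity with that same $c$, since an infinite set of integers is in particular nonempty. Next, suppose $x\neq y$ were a doubly-asymptotic pair: then $\dist(f^n(x),f^n(y))\to 0$ as $n\to\pm\infty$, so for the constant $c$ the set $\{n:\dist(f^n(x),f^n(y))>c\}$ is bounded on both ends, hence finite — contradicting (2). Thus no doubly-asymptotic pair exists.

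For $(3)\Rightarrow(1)$, which I expect to be the main obstacle, I would take $c$ to be an expansivity constant of $f$ and fix $x\neq y$ in $X$. By expansivity and Remark \ref{rmkExpInfinitesimos} there is $n_{\min}\in\Z$, minimal in absolute value, with $\dist(f^{n_{\min}}(x),f^{n_{\min}}(y))>c$. If infinitely many standard integers $n$ satisfy $\dist(f^n(x),f^n(y))>c$, then as in Proposition \ref{proasint} there is already an infinite $n$ with $\aster\dist(\aster f^n(x),\aster f^n(y))>c$ and we are done. So assume only finitely many do; then there are $n^+$ and $n^-$ with $\dist(f^n(x),f^n(y))\le c$ for all $n\ge n^+$ and all $n\le n^-$. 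Applying Lemma \ref{lemasint} to the forward orbit tail (reindexing by $f^{n^+}$) shows $x,y$ are asymptotic for $f$; applying it to the backward tail (reindexing by $f^{n^-}$, using $f^{-1}$) shows $x,y$ are asymptotic for $f^{-1}$; together this says $x,y$ are doubly-asymptotic, contradicting (3). Hence the forbidden case cannot occur, the set of separation times is infinite, and (1) holds. The delicate point is the bookkeeping of the two tails: I must make sure the hypothesis ``$\dist(f^n(x),f^n(y))\le c$ for $n$ large" is genuinely available (it follows from finiteness of $S_c$), that $c$ is simultaneously an expansivity constant so Lemma \ref{lemasint} applies verbatim, and that ``asymptotic for $f$ and for $f^{-1}$" is exactly the definition of doubly-asymptotic used in the introduction — which it is.
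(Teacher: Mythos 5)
Your proposal is correct, and its first two implications coincide with the paper's: your $(1)\Rightarrow(2)$ is the same contrapositive transfer of the tail estimate (handling both infinite tails and quantifying over $c$), and your $(2)\Rightarrow(3)$ just spells out what the paper dismisses as immediate. Where you genuinely diverge is the key implication $(3)\Rightarrow(1)$. The paper works pair by pair at an infinite time: non-doubly-asymptoticity yields an infinite $m$ with $\aster f^m(x)\not\sim\aster f^m(y)$, Robinson's compactness criterion pushes these down to standard points $x',y'$ at positive distance, standard expansivity separates $x'$ and $y'$ at some standard time $n$, and continuity pulls that separation back up to the infinite time $n+m$, at the cost of weakening the constant to $c/2$. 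You instead effectively prove $(3)\Rightarrow(2)$ and then recycle $(2)\Rightarrow(1)$: if the standard separation set of some pair were finite, Lemma \ref{lemasint} applied to the forward tail of $f$ and to the backward tail via $f^{-1}$ (legitimate, since an expansivity constant for $f$ is one for $f^{-1}$) would make that pair doubly-asymptotic, contradicting (3); the infinitude of the separation set then transfers through a choice function, exactly as in the converse half of Proposition \ref{proasint}, to an infinite separation time. Your route avoids Robinson's compactness criterion, keeps the constant $c$ rather than $c/2$, and is more ``standard'' in flavor since it leans on Lemma \ref{lemasint}; the paper's route is a self-contained illustration of the push-down/pull-up nonstandard technique and locates the infinite separation time explicitly near the witness $m$ of non-asymptoticity. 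Both arguments are valid, and your bookkeeping of the two tails and of the role of $c$ as an expansivity constant is handled correctly.
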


\begin{proof}
($1\to 2$) 
Arguing by contradiction suppose that for all $c>0$ there are 
$x,y\in X$ such that 
the set $\lbrace n \in \mathbb{Z} : d(f^n(x), f^n(y)) > c \rbrace$ is finite.
In this case there exists $m \in \N$ such that 
\[
(\forall n \in \N)(n \geq m \rightarrow d(f^n(x),f^n(y)) \leq c).
\]
By the Transfer Principle, the following formula is also true:
\[
(\forall n \in \aster \N)(n \geq m \rightarrow \aster d(\aster f^n(x), \aster f^n(y)) \leq c).
\]
In particular, for all infinite positive integers $n$ we have 
$\aster d(\aster f^n(x), \aster f^n(y)) \leq c$. 
Similarly, $\aster d(\aster f^n(x), \aster f^n(y)) \leq c$ also holds for all infinite negative integers $n$.
Hence, $c$ is not an expansivity constant (of the nonstandard expansivity). 
As $c$ is an arbitrary positive real number we conclude that $f$ is not nonstandard expansive.

($2\to 3$) It is direct from the definitions.

($3\to 1$) 
Let $x, y \in X$. Since $x,y$ are not doubly-asymptotic there exists $m \in {^*}\mathbb{Z}_{\infty}$ such that $f^m(x) \not\sim f^m(y)$
and we can take $\alpha>0$ such that ${^*}\dist (f^m(x), f^m(y)) > \alpha$. By Robinson's compactness criterion, there exist $x', y' \in X$ such that $x' \sim f^m(x)$, $y' \sim f^m(y)$, and if ${^*}\dist (f^m(x), f^m(y)) > \alpha$ then $\dist (x', y') \geq \alpha$, in particular $x' \neq y'$. 
As $f$ is expansive there exists $n \in \mathbb{Z}$ such that $\dist (f^n(x'), f^n(y')) > c$. 
See Figure \ref{fig:diagram}.
\begin{figure}[H]
  \centering
  \begin{tikzpicture}[scale=1, baseline={(0,0)}]
    \definecolor{color817}{rgb}{0.1568627450980392,0.30196078431372547,0.6666666666666666}
    \definecolor{color1236}{rgb}{0.21176470588235294,0.2980392156862745,0.996078431372549}

    \foreach \i/\j in {0.41/-2.3292189, 1.37/-4.0292187, 1.55/-1.8692187, 2.73/-1.3092188, 2.65/-3.4692187, 
                       4.07/-2.6892188, 5.69/2.2307813, 7.57/-2.6692188, 6.01/1.6507813, 7.31/-1.9692187, 
                       9.13/2.9707813, 11.01/-0.40921876, 11.97/-1.3492187, 8.69/4.0307813} {
        \filldraw[black] (\i,\j) circle (1.2pt);
    }

    \foreach \i/\j in {3.19/-0.64921874, 3.47/-0.22921875, 3.83/0.15078124, 4.23/0.73078126, 4.51/1.1107812, 
                       4.91/1.3707813, 5.23/1.7507813, 5.43/1.9107813, 4.37/-2.7292187, 4.85/-2.7292187, 
                       5.25/-2.7292187, 5.63/-2.7292187, 6.07/-2.8292189, 6.51/-2.7892187, 6.89/-2.8292189, 
                       7.31/-2.7892187, 6.33/1.6907812, 6.71/1.8907813, 7.15/2.0507812, 7.53/2.2307813, 
                       7.99/2.4907813, 8.45/2.5907812, 8.83/2.8507812, 7.69/-1.9092188, 8.21/-1.7092187, 
                       8.75/-1.4892187, 9.31/-1.1692188, 10.03/-0.9092187, 10.45/-0.64921874, 10.73/-0.50921875, 
                       6.07/2.5307813, 6.49/2.7707813, 6.85/2.9707813, 7.23/3.2107813, 7.55/3.3507812, 
                       8.05/3.5307813, 8.47/3.7307813, 8.09/-2.6692188, 8.75/-2.4092188, 9.33/-2.3092186, 
                       10.07/-1.9892187, 10.45/-1.9892187, 11.01/-1.5692188, 11.57/-1.3892188} {
        \filldraw[black] (\i,\j) circle (0.6pt);
    }

    \node at (0.27453125,-2.0592186) {$x$};
    \node at (1.5745312,-4.459219) {$y$};
    \node at (1.4145312,-1.4992187) {$f(x)$};
    \node at (3.0345314,-3.7992187) {$f(y)$};
    \node at (2.4145312,-0.95921874) {$f^{2}(x)$};
    \node at (4.2345314,-3.1792188) {$f^{2}(y)$};
    \node at (5.3145313,2.5007813) {${^*}f^{m}(x)$};
    \node at (7.594531,-3.2592187) {${^*}f^{m}(y)$};
    \node at (6.134531,1.2407813) {$x'$};
    \node at (7.1945314,-1.6392188) {$y'$};
    \node at (9.234531,2.4207811) {$f^{n}(x')$};
    \node at (11.374531,-0.09921875) {$f^n(y')$};
    \node at (11.50,1.5607812) {\textcolor{color817}{greater than $c$}};
    \node at (12.80,-1.9392188) {$f^n({^*}f^{m}(y)) = {^*}f^{n +m}(y)$};
    \node at (8.474531,4.480781) {$f^n({^*}f^{m}(x)) = {^*}f^{n +m}(x)$};

    \node at (7.384531,-2.3992188) [rotate=-66] {$\sim$};
    \node at (5.744531,1.8207812) [rotate=-60] {$\sim$};
    \node at (8.844531,3.4007812) [rotate=-56] {$\sim$};
    \node at (11.424531,-0.93921876) [rotate=-52] {$\sim$};

    \draw[color1236, thick] (9.61,2.1507812) -- (10.49,0.31078124);
  \end{tikzpicture}
  \caption{Proving that $f$ is nonstandard expansive.}
  \label{fig:diagram}
\end{figure}
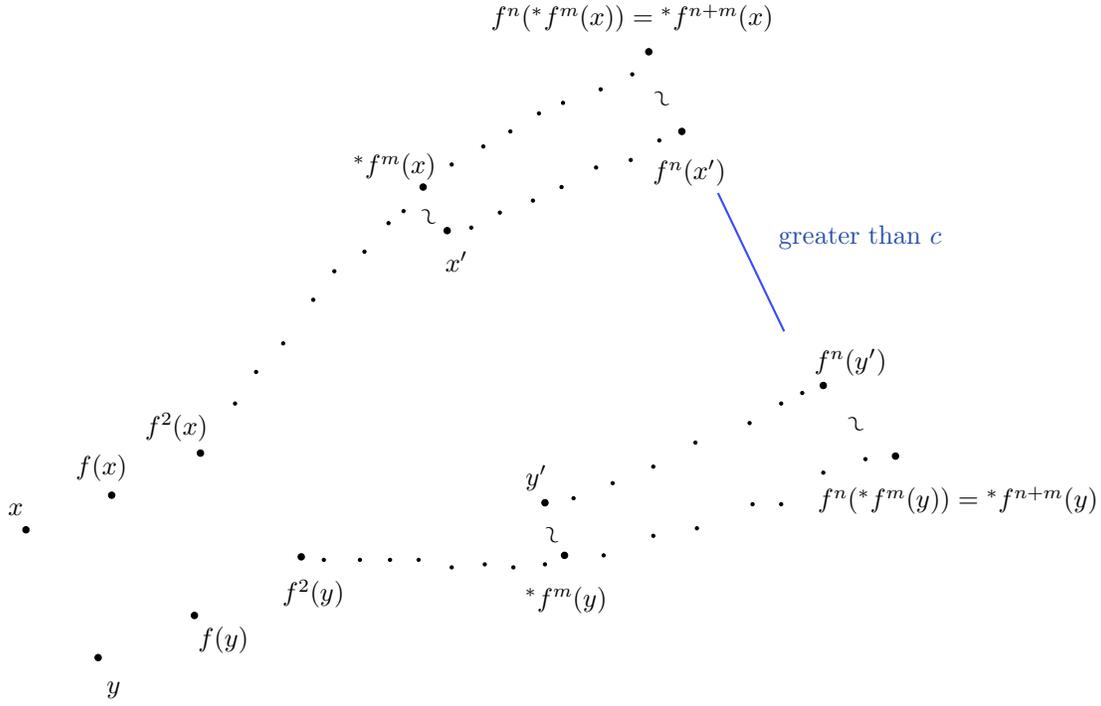
Since $f^n$ is continuous, ${^*}(f^n)(f^m(x)) \sim f^n(x')$, ${^*}(f^n)(f^m(y)) \sim f^n(y')$. Then, $\st({^*}\dist ({^*}f^n(f^m(x)), {^*}f^n(f^m(y)))) \geq c$, but this implies that $\dist (f^n(f^m(x)), f^n(f^m(y))) > \frac{c}{2}$. If $n \in \mathbb{Z}$ and $m \in {^*}\mathbb{Z}_{\infty}$, then $n + m \in {^*}\mathbb{Z}_{\infty}$, and thus $f$ is nonstandard expansive.
\end{proof}

The following subshift is an example of an expansive dynamics without asymptotic pairs; in particular, it is nonstandard expansive.

\begin{example}
\label{exampleIET}
Let $I = [0,1)$, $0 < a < b < 1$, and $a, b$ rationally independent. Let $T: I \rightarrow I$ be the function defined as follows:
\[T(x) =
\begin{cases}
x + 1 - a, & \text{si } x \in [0,a), \\
x - a + 1 - b, & \text{si } x \in [a,b), \\
x - b, & \text{si } x \in [b,1).
\end{cases}\]
For all $x$ in $I$, we define a sequence $(x_k)_{k \in \mathbb{Z}}$ as follows:
\[x_k =
\begin{cases}
0, & \text{si } T^k(x) \in [0,a), \\
1, & \text{si } T^k(x) \in [a,b), \\
2, & \text{si } T^k(x) \in [b,1).
\end{cases}\]
That is, for each $x$ in $I$, we define its \textit{itinerary} as $\iti(x) = (x_k)_{k \in \mathbb{Z}}$.

\[
\xymatrix{
\Sigma^{\mathbb{Z}} \ar[r]^{\sigma} & \Sigma^{\mathbb{Z}} \\
I \ar[u]^{\iti} \ar[r]_T & I \ar[u]_{\iti}
}
\]

Let $\Sigma = \{0, 1, 2\}$, $\sigma : \Sigma^{\mathbb{Z}} \rightarrow \Sigma^{\mathbb{Z}}$ be the shift, and $X = \overline{\bigcup_{x \in I} \iti(x)}$. Then, $\sigma\colon X \rightarrow X$ is expansive and has no doubly-asymptotic points. The details can be found in \cite{Yo}.
\end{example}

\section{Expansivity on countable spaces}
\label{secExpCount}
In this section we consider expansivity on a compact and countable (infinite) metric space $X$. 
For this kind of space we have some particular tools to use.
For every ordinal $\alpha$, we define $X^{(\alpha)}$ as the \textit{Cantor-Bendixson derivative} of $X$ by transfinite induction:
\begin{itemize}
\item $X^{(0)} = X$,
\item if $\alpha = \beta + 1$, then $X^{(\beta + 1)} = (X^{(\beta)})'$, the subset of accumulation points of $X^{(\beta)}$,
\item if $\alpha$ is an infinite limit ordinal, then $X^{(\alpha)} = \bigcap_{\beta < \alpha} X^{(\beta)}$.
\end{itemize}

If there exists an ordinal $\alpha$ such that $X^{(\alpha)}$ is finite
 we say that $\alpha$ is the derived degree of $X$, $\deg(X)=\alpha$. 
In \cite{Kato} it is shown that a countable and compact metric space admits an expansive homeomorphism if and only if $\deg(X)$ is not an infinite limit ordinal.

\begin{theorem} 
\label{thmCountable}
Let $(X,f)$ be an expansive dynamic. If $X$ is countable then $(X,f)$ has doubly-asymptotic points.
\end{theorem}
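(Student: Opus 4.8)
The plan is to exploit the Cantor--Bendixson structure of a countable compact space together with the characterization of countable spaces admitting expansive homeomorphisms from \cite{Kato}. Since $(X,f)$ is expansive and $X$ is countable and compact, $\deg(X)=\alpha$ is defined and is \emph{not} an infinite limit ordinal; write $\alpha=\beta+1$ for some ordinal $\beta$ (the case $\alpha=0$ is impossible since then $X$ would be finite). The key observation is that each derivative $X^{(\gamma)}$ is closed and $f$-invariant (because $f$ is a homeomorphism, it preserves the set of accumulation points at every stage), and $X^{(\beta)}$ is a countable compact space whose derived set $X^{(\beta+1)}=X^{(\alpha)}$ is finite; hence $X^{(\beta)}$ is itself an expansive subsystem with derived degree $1$. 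Thus it suffices to prove the theorem for a countable compact space whose set of accumulation points is finite, and then transfer the doubly-asymptotic pair back up to $X$ (a pair that is doubly-asymptotic inside the closed invariant subset $X^{(\beta)}$ is doubly-asymptotic in $X$).

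So reduce to the case where $X'$ is finite. Then $X'$ consists of finitely many points, each of which is periodic under $f$ (a homeomorphism permutes the finite set $X'$), so after replacing $f$ by a power $f^k$ we may assume every point of $X'$ is fixed; note $f^k$ is still expansive and a doubly-asymptotic pair for $f^k$ is also one for $f$. Now pick any point $p\in X'$. Since $p$ is an accumulation point of $X$, there is a sequence of points of $X$ converging to $p$; in nonstandard terms, by Robinson's compactness criterion (Theorem \ref{Robinson}) there is $y\in\aster X$ with $y\sim p$ and $y\neq p$. Apply Remark \ref{rmkExpInfinitesimos} to the pair $p,y$: there is an infinite $m\in\aster\Z$, minimal in absolute value, with $\aster\dist(\aster f^m(p),\aster f^m(y))>c$. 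Say $m>0$ (the case $m<0$ is symmetric, producing a pair doubly-asymptotic for $f^{-1}$, which is the same as for $f$). As in the proof of Theorem \ref{thmExistAsymptUtz}, take $y'\in X$ with $y'\sim\aster f^m(y)$; since $\aster f^m(p)=p$ is standard, the standard part of $\aster f^m(p)$ is $p$ itself, and $\dist(p,y')\geq c$, so $y'\neq p$. For every standard $n\geq 0$ we have $\aster f^{m-n}(p)=p$ and $\aster f^{m-n}(y)\sim f^{-n}(y')$, and by minimality of $m$, $\aster\dist(\aster f^{m-n}(p),\aster f^{m-n}(y))\leq c$, hence $\dist(p,f^{-n}(y'))\leq c$ for all $n\geq 0$; by Lemma \ref{lemasint}, $p$ and $y'$ are asymptotic for $f^{-1}$, i.e.\ $\dist(f^{-n}(p),f^{-n}(y'))\to 0$, equivalently $\dist(f^{-n}(y'),p)\to 0$.

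It remains to get asymptoticity in the \emph{forward} direction as well. Here we use that $X'$ is finite and invariant: the forward orbit of $y'$ must accumulate on $X'$ (its $\omega$-limit set is a nonempty closed invariant subset of the countable compact set $X$, and one argues it meets $X'$ — indeed if $\omega(y')\subseteq X\setminus X'$ then $\omega(y')$ would be an infinite closed discrete set, impossible by compactness, unless $\omega(y')$ is finite, in which case it consists of periodic points that already lie in some derivative, and one descends). More cleanly: since $X$ has finite derived set, after a further power we may assume $X\setminus X'$ is a union of isolated points each converging forward and backward to fixed points of $X'$; the structure of such a system forces $y'$ to be forward-asymptotic to some $q\in X'$. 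Now replay the nonstandard argument with $q$ in place of $p$, or directly observe that $\dist(f^n(y'),q)\to 0$ as $n\to+\infty$ while $\dist(f^{-n}(y'),p)\to 0$ as $n\to+\infty$; if $p=q$ we are done, as $y'$ and $p$ are doubly-asymptotic. If $p\neq q$, then $y'$ and $p$ are forward-asymptotic (running $f^{-1}$ forward) but we need a genuinely doubly-asymptotic pair: take instead the pair $y'$ and $z'$ where $z'$ is another point in the stable set of $q$ obtained the same way from a different approximant of $q$; a standard pigeonhole on the finitely many ``types'' of isolated points yields two distinct isolated points with the same forward limit $q$ and the same backward limit $p$, and these two points form a doubly-asymptotic pair.

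The main obstacle I expect is the last step: producing genuine \emph{double} asymptoticity rather than one-sided asymptoticity. The nonstandard/Robinson machinery cleanly delivers a pair asymptotic in one time direction (this is exactly the Utz-type argument already in the paper), but combining both directions requires controlling the global orbit structure of isolated points in a space with finite derived set — essentially a finiteness/pigeonhole argument showing that among the (finitely many, up to the relevant equivalence) isolated points, two must share both their forward and backward limits in $X'$. Making that reduction precise, including the passage to a suitable power of $f$ and the bookkeeping of how derivatives are permuted, is where the real work lies; everything else is a direct transcription of the techniques already developed in \S\ref{secAppNSA}.
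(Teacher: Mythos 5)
Your reduction and the construction of $y'$ are fine and run parallel to the paper's own strategy: restrict to $X^{(\beta)}$, whose derived set is finite and consists of periodic points, pass to a power fixing them, and run the Utz-type nonstandard argument (Remark \ref{rmkExpInfinitesimos}, Robinson's criterion, Lemma \ref{lemasint}) to get $y'\neq p$ with $\dist(f^{-n}(y'),p)\to 0$. The genuine gap is exactly where you flagged it, in upgrading one-sided to doubly asymptotic, and it is twofold. First, the structural claim ``after a further power we may assume $X\setminus X'$ is a union of isolated points each converging forward and backward to fixed points of $X'$'' is false in general: a countable compact space whose derived set is a finite set of fixed points can still carry isolated periodic orbits (of arbitrarily large period), and the limit sets of such points lie in their own orbits, not in $X'$. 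Second, the pigeonhole ``on the finitely many types of isolated points'' is never carried out: you give no argument that two \emph{distinct} points share both a forward limit $q$ and a backward limit $p$ (for that you would need infinitely many non-periodic isolated points to distribute over the finitely many pairs in $X'\times X'$, and this is not established in your text).

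The repair is short, and it is precisely the paper's key trick: pair the point with its own iterate. Your $y'$ is non-periodic (if it had period $k$, then $\dist(f^{-n}(y'),p)=\dist(f^{-n}(y'),f^{-n}(p))$ would be $k$-periodic in $n$ and tend to $0$, forcing $y'=p$), so its orbit is infinite, $\alpha(y')=\{p\}$ and $\omega(y')\subseteq X'$, whence $\dist(f^{n}(y'),X')\to 0$ as $n\to+\infty$. Now take the pair $y'$ and $f(y')$ (distinct since $y'$ is not periodic): for every infinite $N$ one has $\aster f^{N}(y')\sim x_i$ for some $x_i$ in the finite fixed set $X'$, hence $\aster f^{N}(f(y'))=f(\aster f^{N}(y'))\sim f(x_i)=x_i\sim \aster f^{N}(y')$, and Proposition \ref{proasint} gives forward asymptoticity; the backward direction is identical using $\alpha(y')=\{p\}$. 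This also lets you bypass your unproved assertion that the forward orbit of $y'$ converges to a \emph{single} $q\in X'$ (true, since a finite $\omega$-limit set of fixed points is a singleton, but it needs an argument), and it removes the case distinction $p=q$ versus $p\neq q$. This is exactly how the paper concludes: it obtains a non-periodic isolated point of $X^{(\beta)}$ from Utz's Theorem \ref{thmExistAsymptUtz} applied to the subsystem $X^{(\beta)}$, and shows that this point together with its image under a suitable power of $f$ is a doubly-asymptotic pair; your nonstandard construction of $y'$ near a fixed point is a legitimate alternative source of such a non-periodic point, but as written the proposal stops one observation short of a proof.
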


\begin{proof}
By \cite{Kato}*{Theorem 2.2} we know that $\deg(X) = \beta + 1$, so $X^{(\beta + 1)} = \{x_1, \ldots, x_n\}$. By transfinite induction, it is easy to prove that for every ordinal $\alpha$, $f(X^{(\alpha)}) = X^{(\alpha)}$.
This implies that the points $\{x_1, \ldots, x_n\}$ are periodic.
Notice that each of these points is fixed by $f^n$.
Therefore, we can restrict $f$ to $X^{(\beta)}$, and $f\vert_{X^{(\beta)}}: X^{(\beta)} \rightarrow X^{(\beta)}$ is an expansive homeomorphism. 
As $X^{(\beta)}$ has infinitely many points and is compact, 
we know from Theorem \ref{thmExistAsymptUtz}
that there are asymptotic points in $X^{(\beta)}$.
Thus, not all points in $X^{(\beta + 1)}$ are periodic. Therefore, there exists $x \in X^{(\beta)} \setminus X^{(\beta + 1)}$ such that $\alpha(x), \omega(x) \subset X^{(\beta + 1)}$. Then, for every positive infinite integer $N$ and for every negative infinite integer $M$, there exist $x_i$ and $x_j$ in $X^{(\beta + 1)}$ such that $\aster f^N(x) \sim x_i$ and $\aster f^M(x) \sim x_j$.

Let $y = f^n(x)$. We will prove that $x$ and $y$ are  asymptotic pairs.
Due to the continuity of $f$, if $\aster f^N(x) \sim x_i$, then $f^n(\aster f^N(x)) \sim f^n(x_i)$. Therefore, $\aster f^N(f^n(x)) \sim x_i$, which implies $\aster f^N(y) \sim x_i$. Hence, $\aster f^N(x) \sim \aster f^N(y)$. Thus, by Proposition \ref{proasint}, $x$ and $y$ are asymptotic. Similarly, $\aster f^M(x) \sim \aster f^M(y)$, and $x,y$ are asymptotic for $f^{-1}$.
\end{proof}

\section{Hyperbolic metrics}
\label{secHypMet}
In this section we will prove Theorem \ref{MT} stated in the Introduction of the article. 
The proof is given in two theorems.
Suppose that $f$ is expansive and recall
\[
\gamma(f)=\sup\{c \geq 0: c \text{ is an expansivity constant of } f\}.
\]

\begin{remark}
    If $(X, f)$ is an expansive dynamic with expansivity constant $c$, we know that $(\aster X, \aster f)$ is an expansive dynamic with the same expansivity constant $c$ in the sense of Remark \ref{rmkExpInfinitesimos}. It is an easy consequence of the Transfer Principle that $\gamma(\aster f)$ is the supremum of the expansivity constants of $\aster f$.
\end{remark}

\begin{lemma} \label{lemahiper}
Let $(X,\dist )$ be a compact metric space where $\dist$ is a hyperbolic metric with expanding factor $\lambda$, and $f\colon X \rightarrow X$ an expansive homeomorphism with expansivity constant $c$. Then, if for some $x,y \in X$, $\dist (f^n(x), f^n(y)) \leq c$ for all $n \geq 0$ (or for all $n \leq 0$), then
$$\dist (f^n(x), f^n(y)) \leq \frac{\dist (x,y)}{\lambda^{\abs{n}}}.$$
for all $n \geq 0$ (or for all $n \leq 0$).
\end{lemma}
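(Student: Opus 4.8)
The plan is to track the distances $d_n := \dist(f^n(x),f^n(y))$ along the orbit and play the hyperbolicity inequality against the uniform bound $c$. I would first dispose of the trivial case $x=y$ (both sides of the claimed inequality vanish), and otherwise note that, $f$ being a homeomorphism, $d_n>0$ for all $n$; under the hypothesis we then have $0<d_n\le c$ for every $n\ge 0$. Since $d_n\le c$, hyperbolicity applies at the pair $(f^n(x),f^n(y))$ and gives, for every $n\ge 1$,
\[
\max\{d_{n-1},d_{n+1}\}=\max_{\abs{i}=1}\dist\big(f^{i}(f^n(x)),f^{i}(f^n(y))\big)\ge\lambda\,d_n .
\]

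The heart of the argument is the claim that in fact $d_{n-1}\ge\lambda d_n$ for every $n\ge 1$; granting it, chaining the inequalities gives $d_0\ge\lambda d_1\ge\cdots\ge\lambda^n d_n$, i.e. $d_n\le d_0/\lambda^n$, which is exactly what we want. To prove the claim I would argue by contradiction: if $d_{m-1}<\lambda d_m$ for some $m\ge 1$, then the displayed inequality forces $d_{m+1}\ge\lambda d_m$, and I would then show inductively that $d_{m+k+1}\ge\lambda d_{m+k}$ for all $k\ge 0$. Indeed, hyperbolicity at index $m+k+1$ yields $\max\{d_{m+k},d_{m+k+2}\}\ge\lambda d_{m+k+1}\ge\lambda^2 d_{m+k}>d_{m+k}$, using $\lambda>1$ and $d_{m+k}>0$, so the maximum must be attained at $d_{m+k+2}$. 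This gives $d_{m+k}\ge\lambda^{k}d_m\to+\infty$ as $k\to\infty$, contradicting $d_{m+k}\le c$. Hence no such $m$ exists, the claim holds, and the case $n\ge 0$ is complete.

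For the parenthetical case $n\le 0$, I would observe that the hyperbolicity condition is invariant under $i\mapsto -i$, so the same metric $\dist$ is hyperbolic (with the same $\lambda$ and the same $c$) for the expansive homeomorphism $f^{-1}$; applying the case already proved to $f^{-1}$ gives the bound for all $n\le 0$. The one genuine idea — and hence the only place needing care — is the dichotomy behind the claim: the alternative $d_{n+1}\ge\lambda d_n$ cannot persist, because it propagates forward and drives the orbit distances above $c$, so backward contraction at rate $\lambda$ is forced. Everything else is bookkeeping, and I do not expect a serious obstacle beyond stating this induction cleanly.
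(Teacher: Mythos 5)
Your argument is correct and follows essentially the same route as the paper: both prove the one-step contraction $d_{n+1}\le d_n/\lambda$ by contradiction, propagating the alternative $d_{n+1}\ge\lambda d_n$ forward via the hyperbolicity inequality to force exponential growth that violates the uniform bound $c$. Your explicit handling of the trivial case $x=y$, of positivity of the $d_n$, and of the $n\le 0$ case via $f^{-1}$ only makes precise details the paper leaves as "analogous."
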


\begin{proof}
We will prove the case $n \geq 0$; for $n \leq 0$ it is analogous. We see that for all $n \geq 0$,
$$\dist (f^{n+1}(x), f^{n+1}(y)) \leq \dfrac{\dist (f^n(x),f^n(y))}{\lambda}.$$
Suppose not, that is, there exists $n$ such that $\lambda \dist (f^{n+1}(x), f^{n+1}(y)) > \dist (f^n(x),f^n(y))$. Since 
$$\max \{ \dist (f^n(x), f^n(y)), \dist (f^{n+2}(x), f^{n+2}(y)) \} \geq \lambda \dist (f^{n+1}(x), f^{n+1}(y)),$$
then $\dist (f^{n+2}(x), f^{n+2}(y)) \geq \lambda \dist (f^{n+1}(x), f^{n+1}(y)).$ By induction, we deduce that for all $j \geq 0$,
$$\dist (f^{n+j}(x), f^{n+j}(y)) \geq \lambda^j \dist (f^n(x), f^n(y)).$$
This contradicts the fact that $\dist (f^n(x), f^n(y)) \leq c$ for all $n \geq 0$.
\end{proof}

\begin{theorem} \label{Directo}
Let $(X,\dist )$ be an expansive dynamical system with 
$x\neq y$ doubly-asymptotic. If $\dist$ is a hyperbolic metric with expanding factor $\lambda$, then there exists $C > 0$ (standard real) such that for every infinite $N$,
$$\gamma(\aster f^N) < \frac{C}{\lambda^{N/2}}.$$
\end{theorem}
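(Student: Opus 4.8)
The plan is to exploit the doubly-asymptotic pair $x \neq y$ together with Lemma~\ref{lemahiper} to produce, for a suitable infinite $N$, a pair of points in $\aster X$ that stay closer than any prescribed small constant under all powers of $\aster f$ from $-N$ to $N$, thereby witnessing that $\gamma(\aster f^N)$ must be very small. First I would fix a hyperbolic metric with expanding factor $\lambda$ and expansivity constant $c$, and let $D = \dist(x,y) > 0$. Since $x$ and $y$ are doubly-asymptotic, for all large $|n|$ we have $\dist(f^n(x), f^n(y)) \leq c$; by enlarging to a full two-sided tail we may assume (after replacing $x,y$ by an iterate, or simply noting the hypotheses of Lemma~\ref{lemahiper} hold on both tails from some point on) that $\dist(f^n(x), f^n(y)) \leq c$ for all $n \geq 0$ and for all $n \leq 0$. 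Then Lemma~\ref{lemahiper} gives $\dist(f^n(x), f^n(y)) \leq D/\lambda^{|n|}$ for every standard integer $n$. By the Transfer Principle this inequality holds for every $n \in \aster \Z$ as well: $\aster\dist(\aster f^n(x), \aster f^n(y)) \leq D/\lambda^{|n|}$ for all $n \in \aster\Z$.

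Next I would run the argument at the "midpoint". Fix an infinite $N$ and consider the points $u = \aster f^{-N}(x)$ and $v = \aster f^{-N}(y)$ in $\aster X$; note $u \neq v$ since $\aster f$ is injective. Applying the transferred inequality, for every $k \in \aster\Z$ with $0 \leq k \leq 2N$ we have
$$
\aster\dist\bigl(\aster f^k(u), \aster f^k(v)\bigr)
= \aster\dist\bigl(\aster f^{\,k-N}(x), \aster f^{\,k-N}(y)\bigr)
\leq \frac{D}{\lambda^{|k-N|}} \leq \frac{D}{\lambda^{0}} = D.
$$
More importantly, writing $n = k - N$, the quantity $|k - N|$ ranges over $0, 1, \dots, N$, so $\max_{0 \le k \le 2N} \aster\dist(\aster f^k(u),\aster f^k(v))$ is attained at the endpoints and equals at most $D$, while at the endpoints $k = 0$ and $k = 2N$ the distance is at most $D/\lambda^{N}$. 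The point is that along the $\aster f^N$-orbit of $u$, the three consecutive iterates $u,\ \aster f^{N}(u) = x,\ \aster f^{2N}(u) = \aster f^{N}(x)$ satisfy: $\aster\dist(u,v) \leq D/\lambda^{N}$ and $\aster\dist(\aster f^{2N}(u), \aster f^{2N}(v)) \leq D/\lambda^{N}$, while the middle term $\aster\dist(\aster f^{N}(u),\aster f^{N}(v)) = \dist(x,y) = D$. I would now argue: suppose $\gamma(\aster f^N) \geq C/\lambda^{N/2}$ for the constant $C$ to be chosen. Then $C/\lambda^{N/2} - \varepsilon$ is an expansivity constant of $\aster f^N$ for every standard $\varepsilon > 0$. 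But $u \neq v$, and for the $\aster f^N$-dynamics the relevant separations at times $0$ and $2$ are bounded by $D/\lambda^{N}$, which is infinitesimal; so the only chance for $\aster f^N$ to separate $u$ and $v$ by a constant comparable to $C/\lambda^{N/2}$ is at some time $jN$ with $|j| \geq 1$, and the best candidate within the controlled window is $j = 1$, giving separation exactly $D$. Comparing $D$ against the bound $D/\lambda^{|k-N|}$ at the neighbouring $\aster f^N$-times forces $C/\lambda^{N/2} \leq D$, which is fine, but I actually need the reverse: I must locate a pair whose entire $\aster f^N$-orbit stays below $C/\lambda^{N/2}$.

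So the cleaner route, which I would adopt, is to pick the pair $p = \aster f^{-\lfloor N/2\rfloor}(x)$, $q = \aster f^{-\lfloor N/2\rfloor}(y)$ and examine it under $\aster f^{N}$: by the transferred Lemma~\ref{lemahiper} estimate, for every $j \in \aster\Z$ we have $\aster\dist(\aster f^{jN}(p), \aster f^{jN}(q)) = \aster\dist(\aster f^{\,jN - \lfloor N/2\rfloor}(x), \aster f^{\,jN - \lfloor N/2 \rfloor}(y)) \leq D/\lambda^{|jN - \lfloor N/2\rfloor|}$, and since $|jN - \lfloor N/2 \rfloor| \geq N/2 - 1$ for every $j \in \Z$ (the exponent is minimised at $j = 0$ or $j = 1$, both giving roughly $N/2$), we conclude $\sup_{j \in \aster\Z}\aster\dist(\aster f^{jN}(p),\aster f^{jN}(q)) \leq D/\lambda^{N/2 - 1} = D\lambda / \lambda^{N/2}$. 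Thus $p \neq q$ is a pair of distinct points never separated by more than $D\lambda/\lambda^{N/2}$ under the dynamics $(\aster X, \aster f^N)$, which by Remark~\ref{rmkExpInfinitesimos} (applied to $\aster f^N$, using that its expansivity constants are bounded by $\gamma(\aster f^N)$) forces $\gamma(\aster f^N) \leq D\lambda/\lambda^{N/2}$. Taking $C = 2D\lambda$ (or any constant strictly larger than $D\lambda$, to get strict inequality) completes the proof. The main obstacle, and the step requiring care, is the reduction to the two-sided tail hypothesis needed to invoke Lemma~\ref{lemahiper}: a doubly-asymptotic pair only satisfies $\dist(f^n(x),f^n(y)) \leq c$ for $|n|$ large, so I must either replace $x,y$ by a well-chosen iterate so that the bound holds for all $n \geq 0$ and all $n \leq 0$ simultaneously, or run Lemma~\ref{lemahiper} separately on each tail and patch the estimates at the finitely many intermediate times — either way one must check that the constant $D$ can be taken standard and that the floor/ceiling bookkeeping in the exponent $|jN - \lfloor N/2\rfloor| \geq N/2 - 1$ is valid for infinite $N$ via Transfer.
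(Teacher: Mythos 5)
Your overall strategy coincides with the paper's: use Lemma \ref{lemahiper} to get exponential decay of $\dist(f^n(x),f^n(y))$ along the orbit of the doubly-asymptotic pair, transfer the estimate, and then exhibit a pair in $\aster X$ whose \emph{entire} $\aster f^N$-orbit stays below a quantity of order $\lambda^{-N/2}$, so that no constant of that size can be an expansivity constant of $\aster f^N$. The execution differs in two respects, both in your favour. The paper anchors the contraction at an infinite time $m$ (where $\aster f^m(x)\sim\aster f^m(y)$), derives the bound only for $|k|>m$, sets $N=2m$, and keeps $(x,y)$ itself as the witness pair — whose distance at $\aster f^N$-time $i=0$ is the standard number $\dist(x,y)$, not something of order $\lambda^{-N/2}$; your shift of the base point by $\lfloor N/2\rfloor$ is exactly what makes the witness pair small at \emph{every} $\aster f^N$-time, including time zero, and it treats an arbitrary infinite $N$ (not only $N=2m$), which is what the statement asks for. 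Likewise, anchoring the decay at standard tail thresholds and transferring the global bound $\dist(f^n(x),f^n(y))\le D/\lambda^{|n|}$ for all $n\in\aster\Z$ is cleaner than juggling bounds valid only beyond an infinite threshold.

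One caveat: the first of your two proposed reductions cannot work. You cannot arrange, by replacing $x,y$ with an iterate, that $\dist(f^n(x),f^n(y))\le c$ for all $n\ge 0$ \emph{and} all $n\le 0$: since $x\neq y$ and $c$ is an expansivity constant, some time shows separation greater than $c$, and shifting the pair merely translates that time. Only your second option is viable: apply Lemma \ref{lemahiper} separately on each tail, starting at standard thresholds $n_0,n_1$ (which exist because the pair is doubly asymptotic), absorb the finitely many intermediate times, and obtain $\dist(f^n(x),f^n(y))\le D/\lambda^{|n|}$ for all standard $n$ with the standard constant $D=\max(c,\diam X)\,\lambda^{\max(n_0,n_1)}$ (so $D$ is \emph{not} $\dist(x,y)$), after which Transfer gives the bound for all $n\in\aster\Z$. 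You flag precisely this step at the end, so it is a signalled bookkeeping issue rather than a missing idea. Finally, the concluding inequality $\gamma(\aster f^N)\le D\lambda/\lambda^{N/2}$ should be justified by the transferred description of $\gamma(\aster f^N)$ as the supremum of the internal expansivity constants of $\aster f^N$ (the remark at the beginning of \S\ref{secHypMet}), rather than by Remark \ref{rmkExpInfinitesimos}; with that reference corrected, your argument goes through.
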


\begin{proof}
As $x,y$ are doubly-asymptotic we have that $\aster f^m(x) \sim \aster f^m(y)$ for all infinite integer $m$. Take infinite $k$ and $m$ such that $\abs{k} > m$. Consider $k > m$, $\aster \dist (\aster f^k(x), \aster f^k(y)) = \aster \dist (\aster f^{k-m}(\aster f^m(x)), \aster f^{k-m}(\aster f^m(y))),$ but by Lemma \ref{lemahiper}, $\aster \dist (\aster f^{k-m}(\aster f^m(x)), \aster f^{k-m}(\aster f^m(y))) \leq \frac{c}{\lambda^{k-m}}$. Similarly, if we consider $k < -m$, we have $\aster \dist (\aster f^k(x), \aster f^k(y)) = \aster \dist (\aster f^{k+m}(\aster f^m(x)), \aster f^{k+m}(\aster f^m(y))) \leq \frac{c}{\lambda^{-k-m}}$. In conclusion, if $\abs{k} > m$, we have 
$$\aster \dist (\aster f^k(x), \aster f^k(y)) \leq \frac{c \lambda^m}{\lambda^{\abs{k}}}.$$
Therefore, if we take $N = 2m$, then for all $i \in \aster \Z$, we have $\aster \dist (f^{Ni}(x), \aster f^{Ni}(y)) \leq \frac{c\lambda^m}{\lambda^{\abs{Ni}}} \leq \frac{c\lambda^m}{\lambda^{2m}} = \frac{c}{\lambda^{N/2}}$. Taking $C = c+1$, we have $\gamma(\aster f^N) < \frac{C}{\lambda^{N/2}}$.
\end{proof}

\begin{theorem} \label{Reciproco}
Let $(X,f)$ be an expansive dynamical system and $\dist$ a bi-Lipschitz metric for $f$, with Lipschitz constant $\lambda$. If there exists $C > 0$ (standard real) and infinite $N$ such that $$\gamma(\aster f^N) < \frac{C}{\lambda^{N/2}},$$
then $f$ has 
doubly-asymptotic points.
\end{theorem}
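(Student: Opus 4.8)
The plan is to exploit the hypothesis $\gamma(\aster f^N) < C/\lambda^{N/2}$ to produce a point $x$ and a nearby point $y$ that are separated by $\aster f^N$ only by a controlled amount, and then to ``spread'' this small separation along the whole orbit using the bi-Lipschitz bound. First I would unwind the meaning of the hypothesis. Since $\gamma(\aster f^N)$ is the supremum of the expansivity constants of $\aster f^N$ (by the remark preceding Lemma~\ref{lemahiper}) and this supremum is strictly less than $C/\lambda^{N/2}$, the value $C/\lambda^{N/2}$ is \emph{not} an expansivity constant of $\aster f^N$. Hence there exist $x \neq y$ in $\aster X$ with $\aster\dist(\aster f^{Ni}(x), \aster f^{Ni}(y)) \le C/\lambda^{N/2}$ for all $i \in \aster\Z$. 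By Robinson's compactness criterion (Theorem~\ref{Robinson}) we may choose standard points near $x$ and $y$; the key point is that $x,y$ must already be at infinitesimal distance, since if they were a standard distance $r>0$ apart, then because $C/\lambda^{N/2}$ is infinitesimal (as $N$ is infinite and $\lambda>1$), the inequality at $i=0$ would read $r \le C/\lambda^{N/2}$, a contradiction. So $x \sim y$, and hence for each $j \in \aster\Z$ with $|j| \le N/2$ (say), the bi-Lipschitz bound gives $\aster\dist(\aster f^{j}(x), \aster f^{j}(y)) \le \lambda^{|j|}\,\aster\dist(x,y)$, which is still infinitesimal.

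The heart of the argument is to control $\aster\dist(\aster f^{j}(x), \aster f^{j}(y))$ for \emph{all} $j \in \aster\Z$, not just small $j$. Fix an arbitrary $j$ and write $j = Ni + r$ with $i \in \aster\Z$ and $|r| \le N/2$. Then
$$
\aster\dist(\aster f^{j}(x), \aster f^{j}(y)) = \aster\dist\bigl(\aster f^{r}(\aster f^{Ni}(x)), \aster f^{r}(\aster f^{Ni}(y))\bigr) \le \lambda^{|r|}\,\aster\dist(\aster f^{Ni}(x), \aster f^{Ni}(y)) \le \lambda^{N/2}\cdot \frac{C}{\lambda^{N/2}} = C.
$$
So the \emph{entire} $\aster f$-orbit pair stays within the standard bound $C$. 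Now I would project: choose standard $x' \sim x$ and $y' \sim y$ (using that $x,y$ are near-standard, or directly that $x \sim y$ forces a common standard point — in fact here $x \sim y$ so $x' = y'$, which is useless, so I must be more careful). The correct move is to \emph{not} project $x$ and $y$ themselves but to project the orbit at infinite times. Pick a positive infinite $K$ that is a multiple of $N$, say $K = N i_0$; then $\aster\dist(\aster f^{K}(x), \aster f^{K}(y)) \le C/\lambda^{N/2}$, still infinitesimal, so $\aster f^K(x) \sim \aster f^K(y)$ and these share a standard point. Again useless. The right idea is: $x,y$ give a pair whose orbit under $\aster f$ is \emph{uniformly infinitesimally close at the multiples of $N$} but we need genuine separation somewhere to invoke expansivity and get a standard doubly-asymptotic pair.

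I expect this to be the main obstacle, and I would resolve it as follows. The pair $(x,y)$ with $x \neq y$ but $x \sim y$, together with the uniform bound $\aster\dist(\aster f^{Ni}(x),\aster f^{Ni}(y)) \le C/\lambda^{N/2}$ for all $i$, is exactly the kind of pair that Remark~\ref{rmkExpInfinitesimos} applies to: since $f$ is expansive with some standard constant $c>0$ and $x \neq y$, there is an infinite time $m$ (necessarily infinite because $x \sim y$, by continuity) with $\aster\dist(\aster f^m(x), \aster f^m(y)) > c$. Write $m = N i_1 + r_1$ with $|r_1| \le N/2$; then $c < \aster\dist(\aster f^m(x),\aster f^m(y)) \le \lambda^{|r_1|}\aster\dist(\aster f^{Ni_1}(x),\aster f^{Ni_1}(y)) \le \lambda^{N/2}\cdot C/\lambda^{N/2} = C$, so $c < \aster\dist(\aster f^m(x), \aster f^m(y)) \le C$ — a genuine separation bounded away from $0$ and from $\infty$. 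Now apply Robinson's criterion to $\aster f^m(x)$ and $\aster f^m(y)$: there exist standard $u, v \in X$ with $u \sim \aster f^m(x)$, $v \sim \aster f^m(y)$, and since $\aster\dist(\aster f^m(x),\aster f^m(y)) > c$ we get $\dist(u,v) \ge c > 0$, so $u \neq v$. For every standard $k \in \Z$, continuity gives $\aster f^{m+k}(x) \sim f^k(u)$ and $\aster f^{m+k}(y) \sim f^k(v)$; and taking the standard part of the bound on $\aster f^{m+k}(x), \aster f^{m+k}(y)$ — which is at most $C$ for all $k$, and in fact, for $k$ ranging over a full residue-free range one argues as in Lemma~\ref{lemahiper} using the hyperbolicity hidden in the self-similar/bi-Lipschitz structure — we conclude $\dist(f^k(u), f^k(v))$ is bounded by a standard quantity tending to $0$ as $k \to \pm\infty$. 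Concretely, for $k = N\ell$ the bound is $\le C/\lambda^{N/2}$, infinitesimal, so $\dist(f^{N\ell}(u), f^{N\ell}(v)) = 0$ for all standard $\ell$?? — no: $u,v$ are standard, so $\dist(f^{N\ell}(u),f^{N\ell}(v))$ is a standard real, and it is $\le \st(C/\lambda^{N/2}) = 0$; but $N$ is infinite, so ``$f^{N\ell}(u)$'' is not defined for standard $\ell$. The resolution is to run the estimate with \emph{standard} exponents: for standard $k>0$, since $m$ is infinite, $m+k$ and $m-k$ are infinite of the same sign patterns, and $\aster f^{m\pm k}(x),\aster f^{m\pm k}(y)$ stay $\le C$; pushing $k\to\infty$ within $\aster\Z$ and using Lemma~\ref{lemahiper}-style decay (valid once the orbit stays below the expansivity constant, which one arranges by first replacing $u,v$ by $f^{k_0}(u),f^{k_0}(v)$ for suitable standard $k_0$) forces $\dist(f^k(u),f^k(v)) \le \dist(u,v)/\lambda^{|k|} \to 0$ as $k \to \pm\infty$. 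Hence $u \neq v$ are doubly-asymptotic for $f$, completing the proof. The delicate point throughout is bookkeeping the division $j = Ni+r$ so that the error factor $\lambda^{|r|} \le \lambda^{N/2}$ exactly cancels the gain $\lambda^{-N/2}$ in the hypothesis; this is where the exponent $N/2$ (rather than $N$) in the statement is essential.
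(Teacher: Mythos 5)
Your opening moves match the paper's: you extract $x\neq y$ in $\aster X$ with $\aster\dist(\aster f^{Ni}(x),\aster f^{Ni}(y))\le C/\lambda^{N/2}$ for all $i$, note that this bound is infinitesimal, invoke transferred expansivity to get an infinite separation time $m$, and project at time $m$ to standard points $u\neq v$. The gap is in your concluding step, the claim $\dist(f^k(u),f^k(v))\le \dist(u,v)/\lambda^{|k|}$. This is justified by appealing to ``Lemma~\ref{lemahiper}-style decay'' and ``the hyperbolicity hidden in the self-similar/bi-Lipschitz structure,'' but Theorem~\ref{Reciproco} assumes only a bi-Lipschitz metric: an upper bound $\max_{|i|=1}\dist(f^i(x),f^i(y))\le\lambda\dist(x,y)$ carries no expansion lower bound, so there is no hidden hyperbolicity to use. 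Even granting a hyperbolic metric, Lemma~\ref{lemahiper} requires the half-orbit distances to stay below an expansivity constant $c$, which you never establish: your only uniform control on the orbit of $(u,v)$ is the bound $C$, which is vacuous on a compact space (every pair has bounded orbit distances) and in any case cannot be below $c$ since $\dist(u,v)\ge c$. The proposed remedy ``replace $u,v$ by $f^{k_0}(u),f^{k_0}(v)$ for suitable standard $k_0$'' is unsupported — showing that some standard shift puts a whole half-orbit below $c$ is essentially the asymptoticity you are trying to prove, so as written the argument is circular at exactly the point where the work has to happen.

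The frustrating part is that the repair is already in your own intermediate estimate, which you then discarded by weakening it to ``$\le C$.'' Writing $j=Ni+r$ with $|r|\le N/2$, you proved $\aster\dist(\aster f^{j}(x),\aster f^{j}(y))\le C\lambda^{|r|-N/2}$; combined with $\aster\dist(\aster f^{m}(x),\aster f^{m}(y))>c$ this forces the residue $r_1$ of $m$ to satisfy $N/2-|r_1|<K:=\log(C/c)/\log\lambda$, a standard bound. Then for standard $k$, stepping backward from $m$ gives $\dist(f^{-k}(u),f^{-k}(v))\le \st\bigl(C\lambda^{|r_1|-k-N/2}\bigr)\le C\lambda^{-k}$, and stepping forward (after the residue wraps past $\pm N/2$, which happens within a standard number $d<K$ of steps) gives $\dist(f^{k}(u),f^{k}(v))\le C\lambda^{d-k}$; hence $u,v$ are doubly asymptotic using only the bi-Lipschitz hypothesis. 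This bookkeeping is, in disguise, the same computation the paper performs: the paper picks the first and last separation times in the block $[0,N]$, shows each projected pair is one-sided asymptotic via Lemma~\ref{lemasint} (the distances on the relevant side are $\le c/2$ by minimality), and then uses the bi-Lipschitz estimate $c/2<\lambda^{m}\,C/\lambda^{N/2}$ to show the two separation times differ by a finite $h$, so the two one-sided pairs are the same pair up to a standard iterate and hence doubly asymptotic. In short: your skeleton is viable and could even yield a slightly cleaner proof, but the step you waved through is precisely the paper's main argument, and as written it does not go through.
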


\begin{proof}
If $\gamma(\aster f^N) \leq \frac{C}{\lambda^{N/2}}$, then there exist $x \neq y$ in $\aster X$ such that for all $k \in \aster \Z$, $\aster \dist (\aster f^{Nk}(x), \aster f^{Nk}(y)) < \frac{C}{\lambda^{N/2}}$. Since $\aster f$ is expansive with expansivity constant $c$, there exists $m$ (taking the smallest) such that $\aster \dist (\aster f^m(x), \aster f^m(y)) > \frac{c}{2}$. As we saw in the proof of Utz's theorem, this situation gives us two asymptotic points, either for $f$ or $f^{-1}$. We know that the following formula holds:
\[
(\forall m \in \aster \Z )(\forall N \in \aster \Z^{+})( \exists k \in \aster \Z^{+}) (N(k-1) < \abs{m} \leq Nk).
\]
Without loss of generality, we can assume that $k = 1$ for both $m$ and $N$, with $m < N$. Then, we have the configuration given by Figure \ref{fig:dibujo2}.

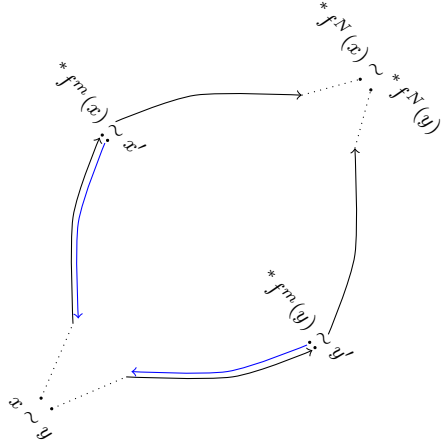
\begin{figure}[H]
  \centering
  \begin{tikzpicture}[rotate=45, scale=1]
	\draw [draw=black,->] plot [smooth] coordinates {(1,0.5) (2,1.5) (3,2)};
	\draw[fill=black] (3.05,2) circle (0.4pt);
	\draw[fill=black] (3.05,1.9) circle (0.4pt);
	\node[below,font=\footnotesize,rotate=270] at (3.8,1.7) {\rotatebox{45}{$\aster f^m(x) \sim x'$}};
	\draw[fill=black] (0,0.1) circle (0.4pt);
	\draw[fill=black] (0,-0.1) circle (0.4pt);
	\draw [dotted] (0,0.1) -- (1,0.5);
	
	\node[below,font=\footnotesize,rotate=270] at (0,-0.3) {\rotatebox{45}{$x \sim y$}};
	\draw [draw=blue,->] plot [smooth] coordinates {(3,1.9) (2,1.4) (1.1,0.5)};
	\draw [draw=black,->] plot [smooth] coordinates {(1,-0.5) (2,-1.5) (3,-2)};
	\draw [dotted] (0,-0.1) -- (1,-0.5);
	\draw[fill=black] (3.05,-2) circle (0.4pt);
	\draw[fill=black] (3.05,-1.9) circle (0.4pt);
	\node[below,font=\footnotesize,rotate=270] at (3.8,-2.2) {\rotatebox{45}{$\aster f^m(y) \sim y'$}};
	\draw [draw=blue, ->] plot [smooth] coordinates {(3,-1.9) (2,-1.4) (1.1,-0.5)};
	
	\draw [draw=black,->] plot [smooth] coordinates {(3.3,2)(4.3,1.5) (5.3,0.5)};
	\draw [dotted] (5.3,0.5) -- (6,0.1);
	\node[below,font=\footnotesize,rotate=270] at (7,-0.7) {\rotatebox{45}{$\aster f^N(x) \sim \aster f^N(y)$}};
	
	\draw[fill=black] (6,0.1) circle (0.4pt);
	\draw [draw=black,->] plot [smooth] coordinates {(3.3,-2)(4.3,-1.5) (5.3,-0.5)};
	\draw [dotted] (5.3,-0.5) -- (6,-0.1);
	\draw[fill=black] (6,-0.1) circle (0.4pt);
	
\end{tikzpicture}
\caption{Asymptotic points $x',y'$.}
  \label{fig:dibujo2}
\end{figure}

Let $m' \in \aster \Z$ be the smallest such that $\aster \dist (\aster f^{N-m'}(x), \aster f^{N-m'}(y)) > \frac{c}{2}$.
Suppose $m \leq m'$. Now let $x'$ and $y'$ in $X$ be such that $\aster f^m(x) \sim x'$ and $\aster f^m(y) \sim y'$. We have already seen that $x'$ and $y'$ are asymptotic. By the same argument, if $x''$ and $y''$ in $X$ satisfy $\aster f^{N-m'}(x) \sim x''$ and $\aster f^{N-m'}(y) \sim y''$, they will also be asymptotic. If we can prove that the iterates between $\aster f^m(x)$ and $\aster f^{N-m'}(x)$ are finite, i.e., there exists a finite $h$ such that $\aster f^{m+h}(x) = \aster f^{N-m'}(x)$, then by continuity we will have $\aster f^{m+h}(x') \sim \aster f^{N-m'}(x)$ and $\aster f^{m+h}(y') \sim \aster f^{N-m'}(y)$.
Therefore, $x'$ and $y'$ will be doubly-asymptotic.

Let us show that such finite $h$ exists. Indeed, since $f$ is bi-Lipschitz, the following formula holds
\[
(\forall x, y \in X )(\forall n \in \Z)( \dist (f^n(x), f^n(y)) \leq \lambda^{n} \dist (x,y))
\]
and by the Transfer Principle we conclude	
\[
(\forall x, y \in \aster X )(\forall n \in \aster \Z )(\aster \dist (\aster f^n(x), \aster f^n(y)) \leq \lambda^{n} \aster \dist (x,y)).
\]

\begin{figure}[H]
  \centering
  \begin{tikzpicture}[rotate=45][scale=1.5][baseline={(0,0)}]
	\draw [draw=black,->] plot [smooth] coordinates {(1,0.5) (2,1.5) (3,2)};
	\draw[fill=black, draw=black] (3.05,2) circle (0.4pt);
	\draw[fill=black, draw=black] (3.05,1.9) circle (0.4pt);
	\node[below,font=\footnotesize,rotate=270] at (3.8,1.7) {\rotatebox{45}{$\aster f^m(x) \sim x'$}};
	\draw[fill=black, draw=black] (0,0.1) circle (0.4pt);
	\draw[fill=black, draw=black] (0,-0.1) circle (0.4pt);
	\draw [dotted] (0,0.1) -- (1,0.5);
	
	\node[below,font=\footnotesize,rotate=270] at (0.1,-0.3) {\rotatebox{45}{$x \sim y$}};
	\draw [draw=blue,->] plot [smooth] coordinates {(3,1.9) (2,1.4) (1.1,0.5)};
	\draw [draw=black,->] plot [smooth] coordinates {(1,- 0.5) (2,- 1.5) (3,- 2)};
	\draw [dotted] (0,-0.1) -- (1,-0.5);
	\draw[fill=black, draw=black] (3.05,-2) circle (0.4pt);
	\draw[fill=black, draw=black] (3.05,-1.9) circle (0.4pt);
	\node[below,font=\footnotesize,rotate=270] at (3.8,-2.8) {\rotatebox{45}{$y' \sim \aster f^m(y)$}};
	\draw [draw=blue, ->] plot [smooth] coordinates {(3,-1.9) (2,- 1.4) (1.1,-0.5)};
	\draw [draw=black,->] plot [smooth] coordinates {(5,2) (6,1.5) (7,0.5)};
	\draw [draw=blue,->] plot [smooth] coordinates {(5,1.9) (6,1.4) (7,0.4)};
	\draw [dotted] (7,0.5) -- (8,0.1);
	\draw [draw=black,->] plot [smooth] coordinates {(5,-2) (6,-1.5) (7,-0.5)};
	\draw [dotted] (7,-0.5) -- (8,-0.1);
	\draw [draw=blue,->] plot [smooth] coordinates {(5,-1.9) (6,-1.4) (7,-0.4)};
	\draw[fill=black, draw=black] (8,0.1) circle (0.4pt);
	\draw[fill=black, draw=black] (8,-0.1) circle (0.4pt);
	\draw[fill=black, draw=black] (4.95,2) circle (0.4pt);
	\draw[fill=black, draw=black] (4.95,1.9) circle (0.4pt);
	\node[below,font=\footnotesize,rotate=270] at (5.7,1.4) {\rotatebox{45}{$\aster f^{N-m'}(x) \sim f^h(x')$}};
	\draw[fill=black, draw=black] (4.95,-2) circle (0.4pt);
	\draw[fill=black, draw=black] (4.95,-1.9) circle (0.4pt);
	\node[below,font=\footnotesize,rotate=270] at (5.7,-3.1) {\rotatebox{45}{$f^h(y') \sim \aster f^{N-m'}(y)$}};
	\node[below,font=\footnotesize,rotate=270] at (8.9,-0.7) {\rotatebox{45}{$\aster f^N(x) \sim \aster f^{N}(y)$}};
	\draw [dotted, black] (3,2) -- (5,2);
    \draw [dotted, black] (3,-2) -- (5,-2);
\end{tikzpicture}
  \caption{Proving doubly-asymptoticity.}
  \label{fig:dibujo3}
 \end{figure}
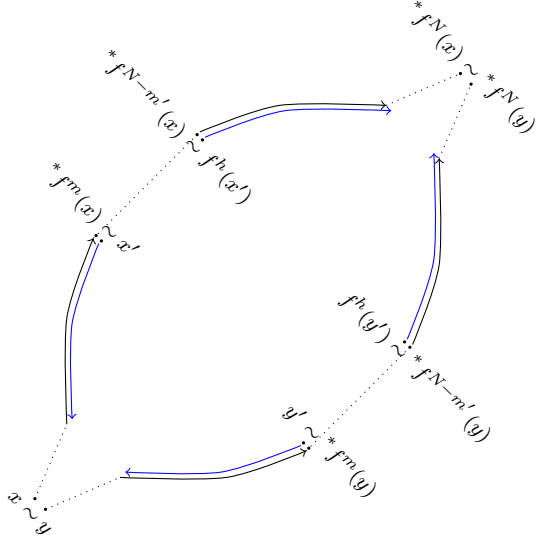
Therefore, we have:

\[
\frac{c}{2} < \aster \dist (\aster f^m(x), \aster f^m(y)) \leq \lambda^{m} \aster \dist (x,y) \leq \lambda^m \frac{C}{\lambda^{N/2}}
\]
and applying logarithms we obtain:
\[
\log\left(\dfrac{c}{2}\right) < (m  - \frac{N}{2}) \log \lambda + \log C.
\]
Therefore,
\[
N < 2m + 2\frac{\log(\frac{2C}{c})}{\log \lambda}.
\]
But $2\dfrac{\log(\frac{2C}{c})}{\log \lambda}$ is finite and $m \leq m'$, see Figure \ref{fig:dibujo3}. Therefore, there exists a finite $h$ such that $m + h = N - m'$.
\end{proof}

We can combine Theorems \ref{Directo} and \ref{Reciproco} to prove 
the Theorem \ref{MT} stated in \S\ref{secIntro}.


\begin{proof}[Proof of Theorem \ref{MT}]
If there are doubly-asymptotic points we can take a self-similar metric $\dist$, which in particular is hyperbolic with expanding factor $\lambda$, and by Theorem \ref{Directo}, we have
$$\gamma(\aster f^N) < \frac{C}{\lambda^{N/2}}.$$
Conversely, if we have the above inequality for a self-similar metric, which is in particular bi-Lipschitz, then by Theorem \ref{Reciproco} we ensure the existence of asymptotic pairs.
\end{proof}

\end{document}